\theoremstyle{plain}
\newtheorem{lema}{Lemma}
\newtheorem{prop}[lema]{Proposition}
\newtheorem{teo}[lema]{Theorem}
\newtheorem{coro}[lema]{Corollary}
\theoremstyle{remark}
\newtheorem{obs}[lema]{Remark}
\theoremstyle{definition}
\newtheorem{defi}[lema]{Definition}
\newtheorem{ej}[lema]{Example}
\renewcommand{\sigma}{g}
\renewcommand{\varphi}{h}
\newcommand{\R}{\mathbb{R}}
\newcommand{\Z}{\mathbb{Z}}
\newcommand{\aut}{\textrm{Aut}}
\begin{document}

\title[Smallest posets with given cyclic automorphism group]{Smallest posets with given cyclic automorphism group}

\author[J.A. Barmak]{Jonathan Ariel Barmak}
\author[A.N. Barreto]{Agust\'in Nicol\'as Barreto}

\thanks{Both authors were supported by CONICET and partially supported by grant UBACyT 20020190100099BA. The first named author was also partially supported by grants CONICET PIP 11220170100357CO, ANPCyT PICT-2017-2806 and ANPCyT PICT-2019-02338.}

\address{Universidad de Buenos Aires. Facultad de Ciencias Exactas y Naturales. Departamento de Matem\'atica. Buenos Aires, Argentina.}

\address{CONICET-Universidad de Buenos Aires. Instituto de Investigaciones Matem\'aticas Luis A. Santal\'o (IMAS). Buenos Aires, Argentina. }

\email{jbarmak@dm.uba.ar}

\email{abarreto@dm.uba.ar}

\begin{abstract}
For each $n\ge 1$ we determine the minimum number of points in a poset with cyclic automorphism group of order $n$.
\end{abstract}

\makeatletter
\@namedef{subjclassname@2020}{%
  \textup{2020} Mathematics Subject Classification}
\makeatother

\subjclass[2020]{06A11, 20B25, 06A07, 05E18}

\keywords{Posets, Automorphism group.}

\maketitle

\section{Introduction}

In 1938 R. Frucht \cite{Fru} proved that any finite group can be realized as the automorphism group of a graph. Moreover, the graph can be taken with $3d|G|$ vertices, where $d$ is the cardinality of any generator set of $G$ (\cite[Theorems 3.2, 4.2]{Fru49}). In 1959 G. Sabidussi \cite{Sab} showed that in fact $O(|G|\textrm{log}(d))$ vertices suffice. In 1974 L. Babai proved that the number of generators is not relevant, and with exception of the cyclic groups $\Z_3, \Z_4$ and $\Z_5$, the graph can be taken with just $2|G|$ vertices. Sabbidussi claims in \cite{Sab} that he was able to compute the smallest number of vertices $\alpha (G)$ in a graph with automorphism group $G$ in the case that $G$ is cyclic of prime power order. Also, he asserts that for $n=p_1^{r_1}p_2^{r_2}\ldots p_k^{r_k}$, $\alpha (\Z_n)=\sum\limits_{i=1}^k \alpha(\Z_{p_i^{r_i}})$. Unfortunately both his computations for $\Z_{p^r}$ and the assertion are wrong. In \cite{Mer} R.L. Meriwether rectifies these errors and correctly determines $\alpha(\Z_n)$ for any $n\ge 1$. However, he commits similar mistakes when trying to extend this computation to arbitrary finite abelian groups. In \cite{Arl1, Arl2} W. Arlinghaus provides a complete calculation of $\alpha (G)$ for $G$ finite abelian. The proof follows these steps. First compute $\alpha (G)$ for $G$ cyclic of prime power order, then for arbitrary finite cyclic groups, then for abelian $p$-groups and finally, the general case.

In parallel, the analogous problem was studied for partially ordered sets. In 1946 G. Birkhoff \cite{Bir} proved that for any finite group $G$ there is a poset of $|G|(|G|+1)$ points and automorphism group isomorphic to $G$. Then Frucht \cite{Fru50} improved this to $(d+2)|G|$ points. In 1980 Babai \cite{Bab2} proved that $3|G|$ points are enough. However, the smallest number $\beta (G)$ of points of a poset with an arbitrary finite abelian group $G$ of automorphisms has not yet been determined. In this paper we compute $\beta (G)$ for every finite cyclic group $G$. This result was first announced in \cite{Barr}. In \cite{Barr} we computed first $\beta (G)$ for $G$ cyclic of prime power order, then for arbitrary finite cyclic and for finite abelian $p$-groups with $p\ge 11$, following the steps of the proof of the graph case exposed by Arlinghaus. The calculation of $\beta(\Z_n)$ in this paper is more direct than the original we gave in \cite{Barr}. The case of $p$-groups will not be addressed in this article. Just as in graphs, the bound $\beta (\Z_n)\le \sum\limits_{i=1}^k \beta(\Z_{p_i^{r_i}})$ holds for $n=p_1^{r_1}p_2^{r_2}\ldots p_k^{r_k}$, but not the equality, in general. For instance $\beta(\Z_{12})=\beta(3)+\beta(4)-1$.

In Section \ref{sectionexamples} we construct explicit examples which provide an upper bound for $\beta(\Z_n)$. In Section \ref{sectionlemmas} we prove some lemmas concerning the cyclic structure of a generator of $\aut (P)$. In the last section we introduce the notion of weight of a prime power in a cycle, which we use in the proof of the lower bound.

\section{Construction of the examples} \label{sectionexamples}


A poset is a set with a partial order $\le$. The elements of the underlying set of a poset are called points. All posets are assumed to be finite, that is, their underlying set is finite. If $P$ is a poset and $x,y\in P$, we write $x<y$ if $x\le y$ and $x\neq y$. We say that $y$ covers $x$ if $x<y$ and there is no $x<z<y$. The edges of $P$ are the pairs $(x,y)$ such that $y$ covers $x$. The Hasse diagram of $P$ is the digraph whose vertices are the points of $P$ and the edges are the edges of $P$. If the orientation of an arrow is not indicated in the graphical representation, we assume it points upwards. A morphism $P\to Q$ of posets is an order-preserving map, i.e. a function $f$ between the underlying sets such that for every pair $x,y\in P$ with $x\le y$ we have $f(x)\le f(y)$. If $P$ is a poset, since it is finite, an automorphism of $P$ is just a permutation of the underlying set which is a morphism. A subposet of a poset $P$ is a subset of the underlying set with the inherited order. Given an automorphism $g$ of a poset $P$, we say that a subset $A$ of the underlying set of $P$ is invariant or $g$-invariant if $g(A)=A$. In this case, $g$ induces an automorphism on the subposet with underlying set $A$.

\begin{defi}
Define $b(1)=0$, $b(2)=1$, $b(3)=b(4)=b(5)=b(7)=3$. For any other prime power $p^r$, define $b(p^r)=2$. 
\end{defi}

\begin{prop} \label{ejemplos}
Let $n=p^r$, where $p\ge 2$ is a prime and $r\ge 0$. Then there exists a poset $P$ with $b(n)n$ points and automorphism group $\aut(P)$ isomorphic to $\Z_{n}$.  
\end{prop}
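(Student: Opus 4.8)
The plan is to split according to the size of $n$. For $n=1$ the empty poset works, and for $n=2$ the two-element antichain has automorphism group $S_2\cong\Z_2$; in both cases the point count $b(n)n$ is met. The substance of the proposition is therefore the construction for prime powers $n\notin\{2,3,4,5,7\}$ together with the four exceptional values $3,4,5,7$.

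For a prime power $n\notin\{2,3,4,5,7\}$ I would build a height-two poset on $2n=b(n)n$ points. Take minimal elements $a_0,\dots,a_{n-1}$ and maximal elements $b_0,\dots,b_{n-1}$, indices in $\Z_n$, and fix a subset $S\subseteq\Z_n$, declaring $a_i<b_j$ exactly when $j-i\in S$. The map $g$ with $g(a_i)=a_{i+1}$ and $g(b_i)=b_{i+1}$ is then an automorphism of order $n$, so $\Z_n\hookrightarrow\aut(P)$; the goal is to choose $S$ forcing equality. Since any automorphism preserves the set of minimal points $\{a_i\}$ and the set of maximal points $\{b_j\}$, it is given by a pair $(\pi,\rho)$ of permutations of $\Z_n$, and preservation of the up-sets translates into the single condition
\[
\rho(i+S)=\pi(i)+S\qquad\text{for all }i\in\Z_n.
\]
Thus everything reduces to a rigidity statement about translates of $S$.

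To settle the rigidity I would take $S$ to be an asymmetric Sidon set (all nonzero differences distinct, with $S-S$ not invariant under any nontrivial affine map of $\Z_n$). Then two distinct minimal points share at most one common upper bound, and $a_i,a_{i'}$ share an upper bound precisely when $i'-i\in D:=(S-S)\setminus\{0\}$. Because this ``common upper bound'' relation is preserved by $(\pi,\rho)$, the permutation $\pi$ is an automorphism of the circulant graph $\mathrm{Cay}(\Z_n,D)$; the asymmetry of $D$ then forces $\pi$ to be a rotation $i\mapsto i+k$, after which the displayed identity pins down $\rho$ as the same rotation. Hence $\aut(P)=\langle g\rangle\cong\Z_n$.

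For the exceptional $n\in\{3,4,5,7\}$ the bottleneck is that $\Z_n$ is too small to host such a rigid two-level pattern, and a height-three poset on $3n=b(n)n$ points is needed: I would place three $g$-orbits $\{a_i\}$, $\{b_i\}$, $\{c_i\}$ on three levels and wire the covering relations by asymmetric connection sets between consecutive levels, again arranged so that no rotation-plus-reflection survives. As these are finitely many posets of at most $21$ points, the equality $\aut(P)=\Z_n$ can be verified directly. The main obstacle throughout is the rigidity argument: ensuring that a single recipe for $S$ (and for the three-level gadgets) simultaneously kills all reflections and all non-affine symmetries for every admissible $n$, which is precisely where the small values $3,4,5,7$ (and $2$) must be excluded and treated separately.
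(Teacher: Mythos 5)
There is a genuine gap in the rigidity step of your general construction. Your plan correctly reduces everything to showing that the only pairs $(\pi,\rho)$ with $\rho(i+S)=\pi(i)+S$ are translations, and correctly observes that $\pi$ must preserve the ``common upper bound'' relation on the minimal level. But the condition you impose on $S$ --- that $D=(S-S)\setminus\{0\}$ be ``not invariant under any nontrivial affine map'' --- is impossible to satisfy: $S-S$ is always invariant under $x\mapsto -x$, so the circulant graph $\mathrm{Cay}(\Z_n,D)$ always admits the reflection $i\mapsto -i$ (indeed its full dihedral group). Hence the Cayley-graph argument can at best pin $\pi$ down to a rotation \emph{or} a reflection, and your proof as written never excludes the reflection. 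Killing it requires using the asymmetry of $S$ itself inside the poset, not of $S-S$; this is exactly what the paper does after reaching the same point: with $S=\{0,1,2,4\}$ it notes that $\{0,2,3,4\}$ has an upper bound in $A'$ while $\{0,-2,-3,-4\}$ does not, which rules out $\varphi=-1_{\Z_n}$.

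There is a second, independent problem with the Sidon choice: even granting that $\pi\in\aut(\mathrm{Cay}(\Z_n,D))$, you need this automorphism group to be no larger than dihedral, and for small $n$ this fails. For $n=8$ a Sidon set has size at most $3$, so $D$ consists of six of the seven nonzero residues; the complementary graph is a perfect matching and $\mathrm{Cay}(\Z_8,D)$ has automorphism group of order far exceeding $16$. The paper avoids both difficulties by \emph{not} taking $S$ Sidon: with $S=\{0,1,2,4\}$ the relation ``at least two common upper bounds'' gives the small neighbourhoods $B(i)=\{i-2,\dots,i+2\}$, and the derived $4$-adjacency relation is exactly the $n$-cycle (with a separate reduced-adjacency argument for $n=8$), whose automorphism group is genuinely dihedral; the reflection is then excluded as above, and the points of $A'$ are recovered as unique upper bounds of $\{j,j-1,j-2,j-4\}$. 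Your treatment of $n=1,2$ and the plan of a $3n$-point three-level gadget for $n=3,4,5,7$ (checkable by finite inspection, and realized in the paper by Frucht's construction $(i,2)>(i,1)>(i,0)<(i+1,2)$) are fine, but the general case needs the reflection-killing argument and a connection set whose difference structure is actually rigid for all prime powers $n\ge 8$.
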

\begin{proof}
For $n=1$ we take the empty poset and for $n=2$ we take the discrete poset on $2$ points. By discrete we mean an antichain, i.e. a poset of pairwise incomparable elements. If $n=3,4,5,7$ we use the well-known general construction \cite{Fru50}: $P=\Z_{n}\times \{0,1,2\}$ with the order $(i,2)>(i,1)>(i,0)<(i+1,2)$ for every $i\in \Z_{n}$. It is easy to see that such poset satisfies $\aut(P) \simeq \Z_{n}$. Suppose then that $n\ge 8$. We take two copies of $\Z_{n}$: $A=\Z_{n}=\{0,1,\ldots, n-1\}$ and $A'=\{0',1',\ldots, (n-1)'\}$. Let $S=\{0,1,2,4\}\subseteq \Z_{n}$. For $i\in A$ and $j'\in A'$ we set $i<j'$ if $j-i\in S$. Any two elements in the same copy of $\Z_n$ are not comparable (see Figure \ref{fig0124}). We will prove that the automorphism group of this poset $P$ is $\Z_{n}$. It is clear that $G=\Z_n$ acts regularly on each copy of $\Z_n$ by multiplication (addition), and this gives a faithful action $G\to \aut(P)$ on $P$. So $G$ can be seen as a subgroup of $\aut(P)$. Since each automorphism of $P$ maps $0\in A$ to another minimal element of $P$, then the order of the $\aut(P)$-orbit of $0\in P$ is $n$. If we prove that the $\aut(P)$-stabilizer of $0\in P$ is trivial, then $|\aut(P)|=n$, so $\aut(P)$ is isomorphic to $G$. Let $\varphi \in \aut(P)$ be such that $\varphi (0)=0$.

\begin{figure}[h] 
\begin{center}
\includegraphics[scale=0.45]{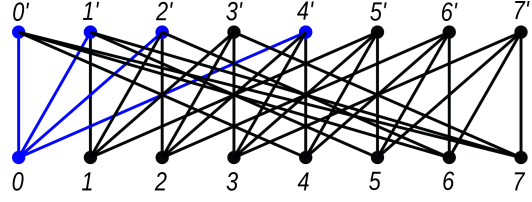}

\caption{The Hasse diagram of $P$ for $n=8$.}\label{fig0124}
\end{center}
\end{figure}

We define the \textit{double neighborhood} $B(i)$ of $i\in A$ as the set of those $j\in A$ such that $\# (P_{> i}\cap P_{> j}) \ge 2$, that is, there are at least two points in $A'$ greater than both, $i$ and $j$. The \textit{reduced double neighborhood} of $i\in A$ is $\hat{B}(i)=B(i)\smallsetminus \{i\}$.  Since $\varphi$ is an automorphism, $B(\varphi(i))=\varphi (B(i))$ and $\hat{B}(\varphi(i))=\varphi (\hat{B}(i))$. Given $k\ge 1$, we say that two points $i,j\in A$ are \textit{$k$-adjacent} if $\#(B(i)\cap B(j))=k$, and they are \textit{reduced $k$-adjacent} if $\#(\hat{B}(i)\cap \hat{B}(j))=k$. Clearly, $\varphi$ preserves $k$-adjacency and reduced $k$-adjacency. Suppose first that $n\ge 9$. Then for each $i\in A$, $B(i)=\{i-2,i-1,i,i+1,i+2\}$. It is easy to see that $i,j$ are $4$-adjacent if and only if $i-j=\pm 1$. Thus, $\varphi$ induces an automorphism of the cyclic graph on $A$ with edges given by $4$-adjacency. Since $\varphi(0)=0$, $\varphi$ is either the identity $1_{\Z_n}$ or $-1_{\Z_n}$. The second case cannot occur as $\{0,2,3,4\}$ has an upper bound while $\{0,-2,-3,-4\}$ does not. Thus every point of $A$ is fixed by $\varphi$. If $j'\in A'$, then $j'$ is the unique upper bound of $\{j,j-1,j-2,j-4\}$. Thus $\varphi(j')=j'$. This proves that $\varphi=1_P$. 

Finally, suppose $n=8$. Given $i\in A$, we have now $\hat{B}(i)=\{i-2,i-1,i+1,i+2, i+4\}$ and $i,j\in A$ are reduced $4$-adjacent if and only if $i-j=\pm 3$. Thus, $\varphi$ induces an automorphism in the cyclic graph on $A$ with edges given by reduced $4$-adjacency. Then $\varphi=1_{\Z_n}$ or $-1_{\Z_n}$. The second case cannot occur for the same reason as before. Since each point in $A'$ is determined by the set of smaller points, $\varphi=1_P$.    
\end{proof}

\begin{ej} \label{ejemplo12}
There exists a poset $P$ with $20$ points and automorphism group isomorphic to $\Z_{12}$.

Take two copies $A=\{0,1,2,3,4,5\}$, $A'=\{0',1',2',3',4',5'\}$ of $\Z_6$ and two copies $B=\{0'',1'',2'',3''\}$, $B'=\{0''',1''',2''',3'''\}$ of $\Z_4$. The underlying set of $P$ is the union of these four sets. Let $S=\{0,1,3\}\subseteq \Z_6$, $T=\{0,1\}\subseteq \Z_4$. Define the following order in $P$: $i<j'$ if $j-i\in S$, $i''<j'''$ if $j-i\in T$, $i'''<j'$ if $j-i$ is even, $i''<j$ if $j-i$ is even, $i''<j'$ for every $i,j$ (see Figure \ref{figveinte}). 

\begin{figure}[h] 
\begin{center}
\includegraphics[scale=0.4]{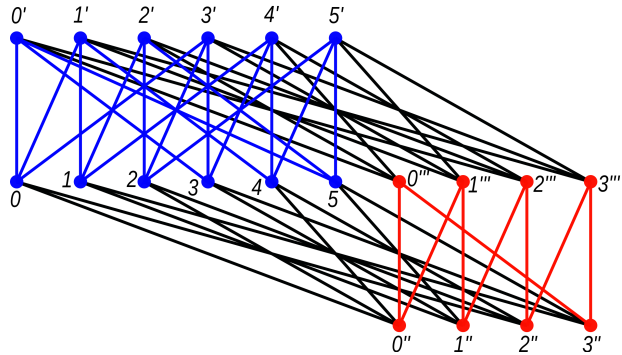}

\caption{A poset $P$ of $20$ points and $\aut(P)\simeq \Z_{12}$.}\label{figveinte}
\end{center}
\end{figure}

It is clear that $G=\Z_{12}$ acts in each copy of $\Z_6$ and of $\Z_4$ by multiplication (addition). This induces a faithful action of $G$ on $P$. If $\varphi\in \aut(P)$, $\varphi(0'')$ must be a minimal point $i''$ and $\varphi(0')$ must be a maximal point $j'$. However $i,j$ cannot have different parity. Indeed, among the points $0,2,4,0''',1'''$ which cover $0''$, there are just two $0,0'''$ smaller that $0'$. However, if $i\in \Z_4$ and $j\in \Z_6$ have different parity, among the points covering $i''$ ($k\in A$ with $k\equiv i (2)$ and $i''', (i+1)'''$) there are three smaller that $j'$: both $j-1, j-3$, and one of $i''', (i+1)'''$. Thus $i\equiv j(2)$, which implies that the $\aut(P)$-orbit of the set $\{0',0''\}$ has at most $12$ elements. If we prove that the $\aut(P)$-stabilizer of $\{0',0''\}$ is trivial, then $|\aut(P)|\le 12=|G|$, so $\aut(P)$ is isomorphic to $G$.
Let $\varphi$ be an automorphism of $P$ which fixes $0'$ and $0''$.

Note that $2''$ is the unique minimal point different from $0''$ which is covered by three points that cover $0''$. Thus $\varphi(2'')=2''$. Now, the points of $B'$ are the unique points of $P$ which cover exactly one of $0'',2''$. Thus $B'$ is invariant. This implies that $\varphi$ restricts to an automorphism of the subposet $R$ with underlying set $B\cup B'$ and of the subposet $Q$ with set $A\cup A'$.
Since $R$ is a cycle, there are only two automorphisms of $R$ fixing $0''$. One is the identity and the other maps $0'''$ to $1'''$. However, $0'''<0'$ while $1'''\nless 0'$. Thus $0'''$ is fixed by $\varphi$ and then $\varphi$ is the identity of $R$. 

Suppose that $i'\in A'$ is a fixed point. Among the points $i,i-1,i-3$ in $A$ covered by $i'$, only $i-1$ and $i-3$ share a lower bound. Thus $\varphi(i)=i$. Similarly, among the points $(i-4)',(i-2)',(i-1)'$ of $A'$ not covering $i$, only $(i-4)'$ and $(i-2)'$ share a lower bound in $B'$. Thus $(i-1)'$ is fixed. In conclusion, we showed that $i'$ fixed implies that both $i$ and $(i-1)'$ are fixed. Since $0'$ is fixed, this implies that every point of $A$ and of $A'$ is fixed. Thus $\varphi=1_P$.  

\end{ej}

We say that a prime power $p^r$ ($r\ge 1$) exactly divides an integer $n$, and write $p^r\parallel n$, if $p^r| n$ and $p^{r+1}\nmid n$.

\begin{teo} \label{teoejemplos}
Let $n=p_1^{r_1}p_2^{r_2}\ldots p_k^{r_k}$ where the $p_i$ are different primes and $r_i\ge 1$ for every $i$. Then there exists a poset with automorphism group isomorphic to $\Z_n$ and $\sum\limits_{i=1}^k b(p_i^{r_i})p_i^{r_i}-1$ points if $3\parallel n$ and $4\parallel n$, and with $\sum\limits_{i=1}^k b(p_i^{r_i})p_i^{r_i}$ points otherwise.
\end{teo}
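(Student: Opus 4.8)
The plan is to assemble $P$ from the prime-power posets of Proposition \ref{ejemplos} (and, when $12\parallel n$, the exceptional poset of Example \ref{ejemplo12}) using the ordinal sum, an operation that multiplies automorphism groups without adding points. Recall that the ordinal sum $P\oplus Q$ of posets $P,Q$ has underlying set $P\sqcup Q$, retains the orders of $P$ and $Q$, and declares $x<y$ for all $x\in P$ and $y\in Q$. Because the distinct prime powers $p_i^{r_i}$ are pairwise coprime, the Chinese Remainder Theorem gives $\prod_i\Z_{p_i^{r_i}}\cong\Z_n$; hence it will be enough to realize this direct product as the automorphism group of a suitable ordinal sum.

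First I would isolate the gluing lemma: \emph{if $P$ and $Q$ are nonempty posets then $\aut(P\oplus Q)\cong\aut(P)\times\aut(Q)$}. The proof is a cardinality comparison: for $x\in P$ the strict up-set $\{z:z>x\}$ contains all of $Q$, so it has at least $\#Q$ elements, whereas for $y\in Q$ the strict up-set $\{z:z>y\}$ is contained in $Q\smallsetminus\{y\}$ and so has at most $\#Q-1$ elements. Thus no automorphism $\varphi$ can send a point of $P$ to a point of $Q$, and since $\varphi$ is a bijection this forces $\varphi(P)=P$ and $\varphi(Q)=Q$. No comparabilities are added inside $P$ or inside $Q$, so $\varphi$ restricts to automorphisms of each; conversely any pair in $\aut(P)\times\aut(Q)$ preserves every cross relation $x<y$ and hence extends to $P\oplus Q$. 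Iterating yields $\aut(P_1\oplus\cdots\oplus P_m)\cong\prod_j\aut(P_j)$, with $\sum_j\#P_j$ points.

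With the lemma available the two cases are pure bookkeeping. If we are not in the situation $3\parallel n$ and $4\parallel n$, I would take $P=P_1\oplus\cdots\oplus P_k$, where $P_i$ is the poset of Proposition \ref{ejemplos} with $\aut(P_i)\cong\Z_{p_i^{r_i}}$ and $b(p_i^{r_i})p_i^{r_i}$ points; the lemma and the Chinese Remainder Theorem then give $\aut(P)\cong\Z_n$ with $\sum_i b(p_i^{r_i})p_i^{r_i}$ points. If instead $3\parallel n$ and $4\parallel n$, then $3$ and $4$ occur among the $p_i^{r_i}$, and the two pieces realizing $\Z_3$ and $\Z_4$ would contribute $b(3)\cdot3+b(4)\cdot4=9+12=21$ points; I would replace them by the single $20$-point poset of Example \ref{ejemplo12}, whose automorphism group is $\Z_{12}\cong\Z_3\times\Z_4$, and form the ordinal sum with the remaining prime-power pieces. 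As $12$ is coprime to all other $p_i^{r_i}$, the automorphism group is again $\Z_n$, and the count becomes $\sum_i b(p_i^{r_i})p_i^{r_i}-21+20=\sum_i b(p_i^{r_i})p_i^{r_i}-1$.

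I do not expect a serious obstacle here, since Proposition \ref{ejemplos} and Example \ref{ejemplo12} already supply all the structural content; the one point demanding care is the verification that the ordinal sum creates no spurious automorphisms, which is exactly guaranteed by the up-set cardinality comparison in the gluing lemma. The remaining work is merely to check that the exceptional saving of one point occurs precisely when both $3$ and $4$ exactly divide $n$, and otherwise not.
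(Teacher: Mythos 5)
Your proof is correct and follows essentially the same route as the paper: an ordinal sum of the prime-power posets from Proposition \ref{ejemplos}, with the $\Z_3$- and $\Z_4$-pieces replaced by the $20$-point poset of Example \ref{ejemplo12} when $3\parallel n$ and $4\parallel n$. The only cosmetic difference is that you justify $\aut(P\oplus Q)\cong\aut(P)\times\aut(Q)$ by comparing cardinalities of strict up-sets, whereas the paper invokes preservation of heights; both verifications are valid.
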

\begin{proof}
By Proposition \ref{ejemplos}, for each $1\le i\le k$ there exists a poset $P_i$ with $b(p_i^{r_i})p_i^{r_i}$ points and $\aut(P_i)\simeq \Z_{p_i^{r_i}}$. The non-Hausdorff join or ordinal sum $P=P_1\oplus P_2\oplus \ldots \oplus P_k$ is constructed by taking a copy of each poset and keeping the given ordering in each copy, while setting $x<y$ for each $x\in P_i$ and $y\in P_j$ if $i<j$. Since each automorphism of $P$ preserves heights (the maximum length of a chain with a given maximum element), it restricts to automorphisms of each $P_i$. Thus $\aut(P)=\aut(P_1)\oplus \aut(P_2) \oplus \ldots \oplus \aut(P_k)=\Z_n$. If $p_i^{r_i}=3$ and $p_j^{r_j}=4$, instead of $P_i$ and $P_j$ we take the poset in Example \ref{ejemplo12} of $20=b(3)3+b(4)4-1$ points and automorphism group $\Z_{12}$.
\end{proof}

\section{Lemmas} \label{sectionlemmas}

Let $X$ be a finite set, $n\ge 1$ and $x_0,x_1,\ldots, x_{n-1}$ pairwise different elements of $X$. The cycle $\alpha =(x_0,x_1,\ldots , x_{n-1})$ is the permutation which maps $x_i$ to $x_{i+1}$ (indices considered modulo $n$) and fixes every other point of $X$. The number $n$ is the order or length of the cycle, which we denote by $|\alpha|$. A cycle of order $n$ is also called an $n$-cycle. A cycle $\alpha$ is non-trivial if $|\alpha|\ge 2$. The representation $(x_0,x_1,\ldots, x_{n-1})$ of a non-trivial $n$-cycle is unique up to cyclic permutation of the $n$-tuple $x_0,x_1,\ldots,x_{n-1}$. The underlying set of a non-trivial cycle $(x_0,x_1,\ldots , x_{n-1})$ is $\{x_0,x_1,\ldots , x_{n-1}\}$. Many times we will identify a non-trivial cycle with its underlying set. Two non-trivial cycles are disjoint if their underlying sets are. Any permutation $g$ of $X$ can be written as a composition $\alpha_1 \alpha_2 \ldots \alpha_k$ of disjoint non-trivial cycles. This representation is unique up to reordering of the cycles.
If a cycle $\alpha$ appears in the factorization of $g$, we say that $\alpha$ is contained in $g$ and write $\alpha \in g$. The orbits of $g$, or of the action of the cyclic group $\langle g \rangle$ on $X$, are the underlying sets of the cycles in $g$ and the singletons consisting of fixed points. Disjoint non-trivial cycles commute. Thus, if $g$ is a composition $\alpha_1 \alpha_2 \ldots \alpha_k$ of disjoint non-trivial cycles and $m\in \Z$, then $g^m=\alpha_1^m \alpha_2^m \ldots \alpha_k^m$. If $\alpha$ is a cycle of length $n$ and $m\in \Z$, the permutation $\alpha^m$ is a composition of $(n,m)=$gcd$\{n,m\}$ cycles of length $\frac{n}{(n,m)}$. In particular, $\alpha^m$ is a cycle with the same underlying set as $\alpha$ if $n$ and $m$ are coprime. Moreover, the order of $g$ is the least common multiple of the lengths of its cycles and if a cycle of $g$ has order $n$, and $m\in \Z$, then $g^m$ fixes every point of the cycle if $n|m$, and fixes no point of the cycle otherwise.

If $g$ is an automorphism of a poset $P$, then each orbit of $g$ is discrete, as $a<b$ would imply that $a<g^k(a)$ for some $k\in \Z$ and then $\{g^{nk}(a)\}_{n\ge 0}$ would be an infinite chain. If $A$ and $B$ are two different orbits of $g$ we cannot have an element $a\in A$ smaller than another $b\in B$ and at the same time an element $b'\in B$ smaller than another $a'\in A$, as this would imply that $a<b=g^k(b')<g^k(a')$ for some $k\in \Z$, contradicting the fact that $A$ is discrete, or the antisymmetry of the order.

\begin{obs} \label{extension}
Let $P$ be a poset and let $g$ be an automorphism of $P$. Let $Q$ be the subposet of points which are not fixed by $g$. Let $A_0,A_1,\ldots, A_k$ be the orbits of the automorphism induced by $g$ on $Q$. If $h$ is an automorphism of $Q$ such that $h(A_i)=A_i$ for every $i$, then it extends to an automorphism of $P$ which fixes every element not in $Q$.

Indeed, if $x\in P\smallsetminus Q$, $y\in A_i$ and $x<y$, then $h(y)\in A_i$, so there exists $r\ge 0$ such that $g^r(y)=h(y)$. Then $x=g^r(x)<g^r(y)=h(y)$. Similarly, if $x>y$, then $x>h(y)$.
\end{obs}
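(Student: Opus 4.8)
The plan is to define the extension explicitly and then verify directly that it is an order-preserving bijection whose inverse is also order-preserving. Let $\tilde h\colon P\to P$ be given by $\tilde h(x)=h(x)$ for $x\in Q$ and $\tilde h(x)=x$ for $x\in P\smallsetminus Q$. Since $h$ permutes the underlying set of $Q$ while $\tilde h$ is the identity on its complement, $\tilde h$ is immediately a bijection of the underlying set of $P$; the only thing that requires work is monotonicity.

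To check that $\tilde h$ preserves the order I would split a comparability $x\le y$ in $P$ into cases according to whether the endpoints lie in $Q$. If both $x,y\in P\smallsetminus Q$ there is nothing to do, since $\tilde h$ fixes both. If both $x,y\in Q$, then $x\le y$ holds in the subposet $Q$ as well, because the order on $Q$ is inherited, so $h(x)\le h(y)$ as $h$ is a morphism of $Q$. The remaining, genuinely delicate case is the \emph{mixed} one, say $x\in P\smallsetminus Q$ and $y\in Q$ with $x<y$ (the case $x\in Q$, $y\in P\smallsetminus Q$ being symmetric).

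For the mixed case the key observation, and the heart of the argument, is the role of the hypothesis $h(A_i)=A_i$. If $y\in A_i$, then $h(y)\in A_i$, and since $A_i$ is a single orbit of $\langle g\rangle$, there is some $r$ with $h(y)=g^r(y)$. Now I would exploit that $x$ is a fixed point of $g$, hence of $g^r$, while $g^r$ is a \emph{global} automorphism of $P$: from $x<y$ we obtain $x=g^r(x)<g^r(y)=h(y)$, which is exactly $\tilde h(x)<\tilde h(y)$. Thus the power of $g$ carrying $y$ to $h(y)$ transports the comparability intact precisely because it cannot move the fixed endpoint $x$. I expect this to be the only real obstacle: one must resist the temptation to look for a single exponent $r$ that realizes $h$ on all of $A_i$ at once (no such exponent need exist), and instead notice that a pointwise choice $r=r(y)$ is all that is needed, since we only ever transport one comparability at a time.

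Finally, to upgrade $\tilde h$ from an order-preserving bijection to an automorphism I would observe that $h^{-1}$ is again an automorphism of $Q$ satisfying $h^{-1}(A_i)=A_i$, so the identical construction applied to $h^{-1}$ yields an order-preserving extension which is visibly the inverse of $\tilde h$; hence $\tilde h^{-1}$ is order-preserving as well. Alternatively, since $P$ is finite, an order-preserving bijection of $P$ onto itself is automatically an automorphism. Either way this completes the plan.
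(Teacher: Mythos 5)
Your proof is correct and follows essentially the same route as the paper: the paper's one-line argument is exactly your mixed case, transporting a comparability $x<y$ with $x$ fixed and $y\in A_i$ via the power $g^r$ realizing $h(y)=g^r(y)$. The additional verifications you supply (bijectivity, the two non-mixed cases, and the remark that a finite order-preserving bijection is automatically an automorphism, which the paper adopts as a convention in Section 2) are routine and correctly handled.
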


\begin{lema} \label{dos}
Let $n\ge 1$ and let $p^r\neq 2$ be a prime power which exactly divides $n$. Let $P$ be a poset with $\aut(P)$ cyclic of order $n$, and let $g$ be a generator of $\aut (P)$. Then $g$ contains at least two cycles of length divisible by $p^r$. 
\end{lema}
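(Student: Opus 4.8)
The plan is to argue by contradiction: suppose $g$ contains \emph{at most one} cycle whose length is divisible by $p^r$. Since the order $n$ of $g$ is the least common multiple of its cycle lengths and $p^r\parallel n$, the maximum of the $p$-adic valuations of the cycle lengths equals $r$, so some cycle has length with valuation exactly $r$; by our assumption there is then exactly one cycle $\alpha$ of length divisible by $p^r$. Write $m=|\alpha|$. As each cycle length divides $n$ and $p^{r+1}\nmid n$, we get $v_p(m)=r$, and since $p^r\neq 2$ we have $m\ge p^r\ge 3$. The goal is to manufacture, from the uniqueness of $\alpha$, a subgroup of $\aut(P)$ too big or too non-abelian to fit inside the cyclic group $\langle g\rangle$.

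First I would isolate the element $g^{n/p}$. Every other cycle $\beta\in g$ has length $m_\beta$ with $v_p(m_\beta)\le r-1$ and $m_\beta\mid n$; comparing valuations at $p$ (where $v_p(n/p)=r-1$) and at all other primes (where $v_q(n/p)=v_q(n)\ge v_q(m_\beta)$) shows $m_\beta\mid n/p$, so $g^{n/p}$ fixes $\beta$ pointwise. On the other hand $m\nmid n/p$ because $v_p(m)=r>r-1=v_p(n/p)$, so $g^{n/p}$ acts non-trivially on $\alpha$. Hence the set $C$ of automorphisms of $P$ fixing every point outside $\alpha$ is a \emph{non-trivial} subgroup of $\aut(P)=\langle g\rangle$, and in particular cyclic.

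Next I would identify $C$ explicitly. Each element of $C$ is a permutation of the (antichain) orbit $\alpha$ fixing its complement, and, conversely, any permutation of $\alpha$ respecting the order relations tying $\alpha$ to the rest of $P$ underlies an element of $C$ (extend it by the identity on the other orbits and on the fixed points, cf. Remark \ref{extension}). Declaring $x\sim y$ for $x,y\in\alpha$ when $x$ and $y$ have exactly the same comparabilities with the points outside $\alpha$, order-preservation is precisely the requirement that the permutation sends each point into its own $\sim$-class, so $C\cong\prod_{\text{classes}}\mathrm{Sym}(\text{class})$. Because $g$ is an automorphism sending $x_i$ to $x_{i+1}$ while permuting the complement, $\sim$ is invariant under the shift; identifying $\alpha$ with $\Z_m$ via $g$, a shift-invariant equivalence relation on $\Z_m$ is a congruence modulo some divisor $c\mid m$. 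Thus the $\sim$-classes are $c$ residue classes of size $m/c$, and $C\cong (S_{m/c})^{c}$.

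Finally I would close at the group-theoretic level. Non-triviality of $C$ forces $m/c\ge 2$, while cyclicity of $C$ forces $(S_{m/c})^{c}$ to be cyclic; since two equal non-trivial factors never have coprime orders and $S_k$ is non-abelian for $k\ge 3$, this forces $c=1$ and $m/c\le 2$, i.e. $m=2$, contradicting $m\ge 3$. Hence no unique such cycle can exist and $g$ contains at least two cycles of length divisible by $p^r$. I expect the main obstacle to be the middle step: proving rigorously that the cross-comparabilities force the elements of $C$ to respect a \emph{single} shift-invariant equivalence relation, yielding the clean product form $(S_{m/c})^{c}$. Once that structure is established, both the valuation bookkeeping for $g^{n/p}$ and the final contradiction are routine.
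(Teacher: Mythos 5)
Your proof is correct. It opens exactly as the paper's does --- the valuation bookkeeping showing that $g^{n/p}$ fixes every point outside $\alpha$ and moves every point of $\alpha$ is the paper's first step verbatim --- but the endgame is genuinely different. The paper stops as soon as it has one forbidden automorphism: taking $x\in\alpha$ and $y=g^{n/p}(x)\neq x$, the transposition swapping $x$ and $y$ preserves the orbits of $g^{n/p}$, hence extends by the identity to an automorphism of $P$ (Remark \ref{extension}); this extension cannot be a power of $g$, because any power of $g$ fixes either all of $\alpha$ or none of it, whereas the transposition fixes some points of $\alpha$ (as $|\alpha|\ge p^r\ge 3$) and moves others. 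You instead determine the entire group $C$ of automorphisms supported on $\alpha$: your identification of the relevant equivalence relation as a congruence modulo a divisor $c$ of $m$ is sound (the class of $0$ is a subgroup of $\Z_m$ because $\sim$ is shift-invariant and reflexive), and the conclusion $C\cong (S_{m/c})^{c}$ together with non-triviality and cyclicity of $C$ does force $m=2$, a contradiction. Both routes are valid; yours buys a complete structural description of $C$ at the cost of the classification step, while the paper's single-transposition trick is shorter and is the template reused in Lemmas \ref{lema357} and \ref{lema4}. Note also that your own setup already contains the shortcut: since $x\sim g^{n/p}(x)$, the transposition of these two points lies in $C$, and that single element cannot be a power of $g$ by the all-or-nothing behaviour of powers of $g$ on the cycle $\alpha$.
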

\begin{proof}
Since $g$ has order $n$, it contains at least one cycle $\alpha$ of length divisible by $p^r$. Assume there is no other cycle of length divisible by $p^r$. The automorphism $g^{\frac{n}{p}}$ fixes then every point not in $\alpha$. Let $x$ be an element of $\alpha$ and let $\tau$ be the transposition of the underlying set of $\alpha$ which permutes $x$ and $g^{\frac{n}{p}}(x)\neq x$. By Remark \ref{extension}, $\tau$ extends to an automorphism $h$ of $P$ which is a transposition. But any power of $g$ either fixes each point in $\alpha$ or fixes no point of $\alpha$. Since the order of $\alpha$ is at least $p^r>2$, $h \notin \langle g\rangle=\aut(P)$, a contradiction.  
\end{proof}

If a group $G$ acts on a poset $P$, an automorphism of $P$ is said to be induced by the action if it is in the image of the homomorphism $G\to \aut(P)$.

\begin{lema} \label{nuevofacil}
Let $p=3,5$ or $7$. Let $P$ be a poset on which $\Z_p$ acts with exactly two orbits, both of order $p$. Then there exists an automorphism of $P$ not induced by the action for which each orbit of the action is invariant.
\end{lema}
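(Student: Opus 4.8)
The plan is to translate the statement into a question about a single subset of $\Z_p$ and then to build the required automorphism as an affine map. Let $g$ be a generator of the acting group $\Z_p$, with orbits $A$ and $B$, each of size $p$. Since $p$ is prime, $g$ restricts to a $p$-cycle on each orbit, so I can index $A=\{a_i\}_{i\in\Z_p}$ and $B=\{b_j\}_{j\in\Z_p}$ so that $g(a_i)=a_{i+1}$ and $g(b_j)=b_{j+1}$. As observed before Remark~\ref{extension}, each orbit is an antichain and all comparabilities between $A$ and $B$ go in one direction; relabeling if necessary, assume $a_i<b_j$ whenever they are comparable. Because $g$ is an automorphism, whether $a_i<b_j$ holds depends only on $j-i$, so there is a subset $S\subseteq\Z_p$ with $a_i<b_j \iff j-i\in S$. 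The automorphisms induced by the action are exactly the maps $a_i\mapsto a_{i+k}$, $b_j\mapsto b_{j+k}$.

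Next I would search for the desired automorphism among affine maps. For $u\in\Z_p^{\times}$ and $c\in\Z_p$ consider the bijection $h$ defined by $h(a_i)=a_{ui+c}$ and $h(b_j)=b_{uj}$, which manifestly preserves each orbit. A direct check, using $a_i<b_j \iff j-i\in S$, shows that $h$ is order-preserving (hence an automorphism) if and only if $uS=S+c$. Therefore, setting $M(S)=\{u\in\Z_p^{\times}: uS\text{ is a translate of }S\}$, which is a subgroup of $\Z_p^{\times}$, every $u\in M(S)$ produces an orbit-preserving automorphism of $P$. Moreover such an $h$ sends $a_0,a_1$ to elements whose indices differ by $u$, whereas every induced automorphism sends them to elements whose indices differ by $1$; hence $h$ is not induced as soon as $u\neq 1$. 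Thus it suffices to prove $M(S)\neq\{1\}$.

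Finally I would establish $M(S)\neq\{1\}$ by a counting argument with the affine group $\mathrm{AGL}(1,p)=\{x\mapsto ux+c\}$ of order $p(p-1)$, acting on subsets of $\Z_p$. If $S$ is empty or all of $\Z_p$ then $uS=S+c$ holds for all $u,c$, so $M(S)=\Z_p^{\times}$ and we are done; hence assume $1\le|S|\le p-1$. Since $p$ is prime, a proper nonempty subset has no nonzero period, so translation acts freely on such subsets and, for each $u\in M(S)$, there is a unique $c$ with $uS+c=S$; thus the stabilizer of $S$ in $\mathrm{AGL}(1,p)$ has order exactly $|M(S)|$. By orbit–stabilizer, the orbit of $S$ has size $p(p-1)/|M(S)|$, and since this orbit consists of $|S|$-subsets it has at most $\binom{p}{|S|}$ elements, giving $|M(S)|\ge p(p-1)/\binom{p}{|S|}$. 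For $p\in\{3,5,7\}$ one has $\binom{p}{k}<p(p-1)$ for every $1\le k\le p-1$ (the largest values $\binom{3}{1}=3$, $\binom{5}{2}=10$, $\binom{7}{3}=35$ lie below $6,20,42$ respectively), so $|M(S)|>1$, producing a nontrivial multiplier and hence the required non-induced automorphism.

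The main obstacle, and the precise reason the statement is confined to $p=3,5,7$, is the final inequality $\binom{p}{|S|}<p(p-1)$: it is exactly what forces every subset of $\Z_p$ to admit a nontrivial affine symmetry, and it fails for larger primes. Everything else is routine, the only delicate points being the verification that the affine map is genuinely order-preserving (which collapses to the clean condition $uS=S+c$) and the freeness of translation on proper nonempty subsets, which is what makes the stabilizer count exact.
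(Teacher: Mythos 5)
Your proof is correct, and it takes a genuinely different route from the paper's. The paper argues by exhaustive case analysis on the relation set $S\subseteq\Z_p$: it handles $S=\emptyset$, $|S|=1$ and $S=\Z_p$ directly, uses the complement trick to cut the remaining cases in half, normalizes $S$ by replacing $g$ with a power and by inverting $g$, and then exhibits an explicit extra symmetry (a reflection about an ``axis'', or for $p=7$, $|S|=3$, $l=3$ an order-$3$ rotation) in each surviving configuration. You instead prove uniformly that every $S\subseteq\Z_p$ with $p\in\{3,5,7\}$ admits a nontrivial multiplier $u$ with $uS=S+c$, via orbit--stabilizer for $\mathrm{AGL}(1,p)$ acting on $|S|$-subsets together with the inequality $\binom{p}{|S|}<p(p-1)$; the resulting affine map $a_i\mapsto a_{ui+c}$, $b_j\mapsto b_{uj}$ is visibly orbit-preserving and, since $u\neq 1$, not a translation, hence not induced. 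All the supporting steps check out: the reduction of the order of $P$ to a single subset $S$ uses exactly the observations the paper records before Remark \ref{extension}; $M(S)$ is indeed a subgroup; translations act freely on proper nonempty subsets of $\Z_p$ because $p$ is prime, so the stabilizer count is exact; and the numerical inequality holds for $p=3,5,7$ (with $35<42$ the tight case). Your argument is shorter, avoids the complement and relabelling gymnastics, and isolates the precise arithmetic reason the lemma is restricted to these primes, at the cost of being less explicit about what the extra automorphism actually is.
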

\begin{proof}
Let $g=\alpha \beta \in \aut(P)$ be the automorphism induced by a generator of $\Z_p$, where $\alpha=(0,1,\ldots, p-1)$ and $\beta=(0',1',\ldots, (p-1)')$. If no element of $\alpha$ is comparable with an element of $\beta$, then the transposition $(0,1)$ is an automorphism which is different to $g^k$ for any $k\in \Z$, that is, not induced by the action.

Without loss of generality we can assume then that $0$ and $0'$ are comparable, and moreover, that $0<0'$. Then no element in $\beta$ can be smaller than another in $\alpha$. Since $g$ is an automorphism, $i<i'$ for every $0\le i\le p-1$. If no other pair of elements are comparable, then $(0,1)(0',1')$ is an automorphism not induced by the action (it has order $2$, for example). If $i<j'$ for every $0\le i,j\le p-1$, then $(0,1)$ satisfies the desired property. This completes the proof of the case $p=3$ by the following argument. The case we did not analyze is when $P$ has exactly $6$ edges. In that case, let $P^c$ be the \textit{complement} of $P$, defined as the poset $P^c$ with the same underlying set and setting $i<j'$ if and only if $i\nless j'$ in $P$, while $i,j$ are not comparable and $i',j'$ are not comparable for every $i\neq j$. Since $P$ and $P^c$ are non-discrete, they have the same automorphisms. As $P^c$ has only $3$ edges, there is an automorphism of $P^c$ not induced by the action, so this is the required automorphism of $P$.

For $p=5$ we need to consider the case that $P$ has $10$ edges. By the complement argument, this will complete the $p=5$ case. So, suppose $0<k'$ for some $1\le k\le 4$ (and then $i<(i+k)'$ for every $i$, where $i+k$ is considered modulo $5$). Note that $g^k$ is induced by another generator of $\Z_p$ and it maps $i'$ to $(i+k)'$. Thus, for each $0\le i\le 4$, $i<i'$ and $i<g^k(i')$. Therefore we can assume that $k=1$. We have then the ``symmetry about the axis $03'$'', which maps $i$ to $-i$ and $j'$ to $(1-j)'$ (see Figure \ref{figcinco}). This is an automorphism of $P$ which is different to any power of $g$ (it has order $2$).

\begin{figure}[h] 
\begin{center}
\includegraphics[scale=0.18]{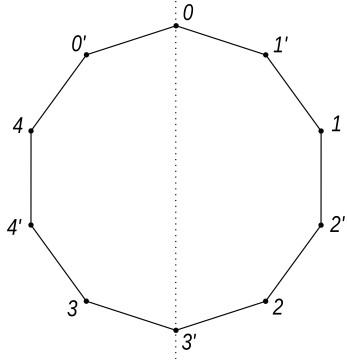}

\caption{The underlying undirected graph of a poset with $10$ points and edges $i'>i<(i+1)'$, and the axis $03'$.}\label{figcinco}
\end{center}
\end{figure}

For $p=7$, if $P$ has $14$ edges, then by the argument above we can assume $i'>i<(i+1)'$ for every $0\le i\le 6$ and there is then a symmetry about $04'$. By the complement argument it only remains to analyze the case that $P$ has exactly $21$ edges. Here $i<i',(i+k)', (i+l)'$ for certain $1\le k\neq l\le 6$ and again we can assume $k=1$ by replacing $g$ by $g^k$. Finally, by replacing $g$ by $g^{-1}$, it suffices to consider the cases $l=2,3$ and $4$ (Figure \ref{figsietetres}).

\begin{figure}[h] 
\begin{center}
\includegraphics[scale=0.4]{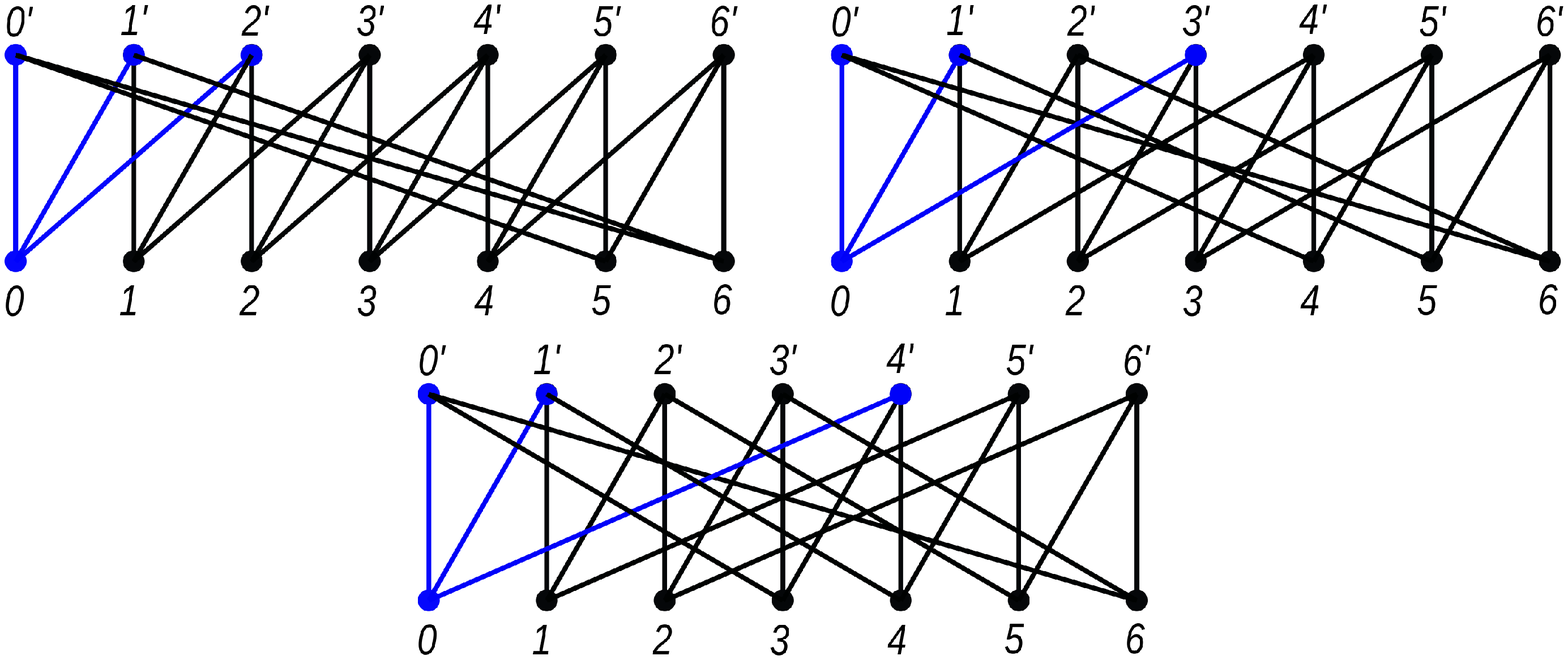}

\caption{Posets with two $\Z_7$-regular orbits and $S=\{0,1,l\}$ for $l=2,3,4$.}\label{figsietetres}
\end{center}
\end{figure}

For $l=2$ we have the involution that maps $i$ to $-i$ and $j'$ to $(2-j)'$. For $l=3$ we have the following automorphism of order $3$: $(142)(356)(0'3'1')(2'4'5')$ (see Figure \ref{triangulo}). For $l=4$, there is again the symmetry about $04'$. 
\begin{figure}[h] 
\begin{center}
\includegraphics[scale=0.2]{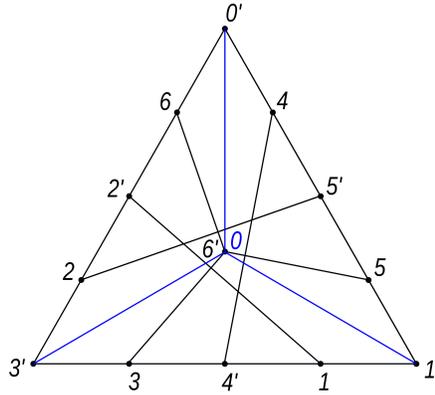}

\caption{The underlying graph of the poset $P$ of $14$ points and edges $i<i',(i+1)',(i+3)'$. An automorphism of order $3$ is given by a rotation of angle $\frac{2\pi}{3}$.}\label{triangulo}
\end{center}
\end{figure}
\end{proof}

\begin{lema} \label{nuevodificil}
Let $P$ be a poset on which $\Z_4$ acts with exactly two orbits of order $4$ or exactly three orbits: two  of order $4$ and one of order $2$. Then there exists an automorphism of $P$ not induced by the action for which each orbit of the action is invariant.
\end{lema}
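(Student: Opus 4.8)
The plan is to encode both situations combinatorially. Label the two orbits of size $4$ as $\alpha=(0,1,2,3)$ and $\beta=(0',1',2',3')$, and (in the second situation) the orbit of size $2$ as $\gamma=(0'',1'')$, so that the generator $g$ of the image of $\Z_4$ acts as $+1$ on each orbit; identify $\alpha,\beta$ with $\Z_4$ through these labelings. By the discussion preceding Remark \ref{extension} the comparabilities between $\alpha$ and $\beta$ all point one way, say from $\alpha$ to $\beta$, so they are recorded by the set $S=\{\,j-i : i<j'\text{ in }P\,\}\subseteq\Z_4$. An automorphism of $P$ fixing every orbit restricts to a pair $(u,v)$ of permutations of $\Z_4$ on $\alpha,\beta$ with $j-i\in S\iff v(j)-u(i)\in S$, and the automorphisms induced by the action are exactly the diagonal translations $u=v=(\,\cdot+k\,)$.

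For the first situation it suffices to produce a pair $(u,v)$ as above which is not a diagonal translation. I plan to argue by cases on $S$: if $S+2=S$ take $(u,v)=(\mathrm{id},\,\cdot+2\,)$; if $-S=S$ take $u=v=(-\,\cdot\,)$; and if $-S+2=S$ take $u=(-\,\cdot\,)$, $v=(-\,\cdot+2\,)$. A routine check over the sixteen subsets of $\Z_4$ shows that every $S$ satisfies one of these three identities except the four \emph{adjacent pairs} $\{c,c+1\}$; for those, after translating the labeling of $\beta$ we may assume $S=\{0,1\}$, and then the reflection $u=(-\,\cdot\,)$, $v=(\,1-\cdot\,)$ preserves $S$. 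None of these pairs is a diagonal translation, so the first situation is settled.

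For the second situation, observe that the only powers of $g$ fixing $\gamma$ pointwise are $1$ and $g^2$, since $g$ and $g^3$ transpose $0''$ and $1''$. Hence it is enough to exhibit an orbit-preserving automorphism $h$ with $h$ the identity on $\gamma$ and $h\neq 1,g^2$; being orbit-preserving and fixing $\gamma$, such an $h$ is automatically not induced by the action. Writing $h=(u,v,\mathrm{id})$, the relation between $\alpha$ and $\gamma$ is either \emph{non-mixed} (each of $0'',1''$ is comparable to all of $\alpha$ or to none of it) or \emph{mixed} ($0''$ and $1''$ separate the classes $\{0,2\}$ and $\{1,3\}$ of $\alpha$); in the mixed case $u$ must preserve these two classes, i.e. $u$ must lie in the Klein group $W=\langle(0\,2),(1\,3)\rangle=\{\,\cdot\mapsto\pm\,\cdot+c:c\in\{0,2\}\,\}$, and symmetrically for $v$ with respect to $\beta$. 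The three pairs built in the previous paragraph all lie in $W\times W$ and are not diagonal translations, so whenever $S$ is not an adjacent pair they already yield the desired $h$ (extended by the identity on $\gamma$), whatever the mixing is.

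It thus remains to treat an adjacent pair, where we may again assume $S=\{0,1\}$. If at most one of the two $\gamma$-relations is mixed, the reflection from the first situation works: one takes it, or its $\alpha\leftrightarrow\beta$ mirror, so that the constrained side falls in $W$. The crux, which I expect to be the main obstacle, is to exclude the case where \emph{both} $\gamma$-relations are mixed. Here I will use transitivity together with the fact that an adjacent pair contains one even and one odd element. Both relations being mixed forces $\gamma$ to be comparable to all of $\alpha$ and all of $\beta$, so $\gamma$ lies above both orbits, below both, or strictly between them. If $\gamma$ is above (or below) both, composing $S$ with the mixed $\beta$-$\gamma$ (resp. $\gamma$-$\alpha$) relation forces the $\alpha$-$\gamma$ (resp. $\gamma$-$\beta$) relation to be full rather than mixed; if $\gamma$ lies between $\alpha$ and $\beta$, composing the two mixed relations forces $i<j'$ for every $j-i$ of a fixed parity, i.e. $S\supseteq\{0,2\}$ or $S\supseteq\{1,3\}$. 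Either outcome contradicts our assumptions, so this configuration never occurs and the proof is complete.
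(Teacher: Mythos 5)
Your proof is correct and follows essentially the same route as the paper's: an exhaustive case analysis of the possible $g$-invariant relation $S\subseteq\Z_4$ between the two $4$-orbits, producing explicit non-translation automorphisms (reflections and class swaps), together with the observation that an automorphism preserving the classes $\{0,2\}$, $\{1,3\}$ on each $4$-orbit automatically respects any relation with the $2$-orbit (the paper gets this from Remark \ref{extension} applied to $g^2$, you from your Klein group $W$). The only substantive difference is local: where you rule out the ``both $\gamma$-relations mixed'' configuration for an adjacent pair $S$ by transitivity, the paper instead checks directly that once $\gamma$ is comparable with one $4$-orbit the relation with the other is forced to be full, so the reflection still extends; your three-identity classification of $S$ is a tidier packaging of the paper's edge-count cases.
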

\begin{proof}
Let $g$ be an automorphism induced by a generator of the action and suppose first that $g=(0,1,2,3)(0',1',2',3')$. If $P$ is discrete, $(0,1)$ satisfies the required conditions. If $P$ has exactly $4$ edges, then as in the proof of Lemma \ref{nuevofacil} we can assume $i<i'$ for every $0\le i\le 3$, and $(0,1)(0',1')$ works. By the complement argument we can assume $P$ has exactly $8$ edges and that it is determined by the relations $i'>i<(i+k)'$ for some $1\le k\le 3$. The case $k=3$ reduces to the case $k=1$ by replacing $g$ by $g^3$. If $k=1$, the symmetry $(1,3)(0',1')(2',3')$ about $02$ satisfies the required conditions. If $k=2$, then $(0,2)$ works.

Suppose then that $g=\alpha \beta \gamma$ with $\alpha=(0,1,2,3)$, $\beta=(0',1',2',3')$, $\gamma=(0'',1'')$. Let $Q$ be the subposet of points in $\alpha$ and $\beta$. Since $g^2=(0,2)(1,3)(0',2')(1',3')$, every automorphism of the poset $Q$ which has $\{0,2\},\{1,3\}, \{0',2'\}, \{1',3'\}$ as invariant sets, extends to $P$ by Remark \ref{extension}. If $Q$ is discrete or if $Q$ has $16$ edges, then $(0,2)$ is an automorphism of $Q$ which extends to $P$ and this extension is not induced by the action. If $Q$ has exactly $4$ edges, we may assume $i<i'$ for every $i$ and then $(0,2)(0',2')$ extends to an automorphism of $P$ different to any power of $g$. If $Q$ has exactly 12 edges, the complement argument can be used. Suppose then $Q$ has exactly $8$ edges. By relabelling we can assume the relations are (a) $i<j'$ for $i\equiv j (2)$ or (b) $i'>i<(i+1)'$ for every $i$. In case (a), $(0,2)$ is again an automorphism which has every nontrivial orbit of $g^2$ as an invariant set. In the rest of the proof we assume we are in case (b). 

If the points of $\gamma$ are not comparable with any point of $Q$, then the symmetry about $02$ which maps $i$ to $-i$ and $j'$ to $(1-j)'$, is an automorphism of $Q$ which extends to $P$, and this extension satisfies the required conditions.

By considering the opposite order, we can assume a point of $\gamma$ is comparable with a point of $\alpha$. Moreover, by relabelling if needed we can assume $0''$ is comparable with $0$. Suppose first that $0''<0$. Since $g$ is an automorphism, then $0''<2$ and $1''<1,3$. If $0''\nless 1$, then $0''\nless 3$ and $1''\nless 0,2$. If $0''<1$, then $0''<3$ and $1''<0,2$. In either case, the symmetry of $Q$ about $02$ extends by the identity to an automorphim of $P$ which is not induced by the action, even though this automorphism of $Q$ does not have the orbits of $g^2$ as invariant sets. Finally suppose $0''>0$. Then $0''>2$ and $1''>1,3$. We can assume no element in $\beta$ is smaller than an element in $\gamma$, by the previous case and the duality argument. Also, we cannot have an element of $\gamma$ being smaller than another $j'$ of $\beta$, since this would imply that $i<j'>i+2$, modulo 4, for certain $0\le i\le 3$, which is absurd. In any case, if $0''\ngtr 1$ or if $0''>1$, we have that the symmetry of $Q$ about $02$ extends to an automorphism of $P$. 
\end{proof}

\begin{lema} \label{lema357}
Let $p=3,5$ or $7$. Let $P$ be a poset with cyclic automorphism group of order $n\ge 1$, and let $g\in \aut(P)$ be a generator. Suppose $g$ contains a $p$-cycle $\alpha$ and a $pk$-cycle $\beta\neq \alpha$ for some $p\nmid k\ge 1$. Then it contains a third cycle whose length is divisible by $p$.
\end{lema}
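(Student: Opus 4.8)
The plan is to argue by contradiction. Suppose $g$ contains no cycle of length divisible by $p$ other than $\alpha$ and $\beta$. Then the $p$-adic valuation of $n$ equals $1$, so I write $n=pm$ with $p\nmid m$; since the length $pk$ of $\beta$ divides $n$, also $k\mid m$. The first move is to pass to the subgroup $H=\langle g^{m}\rangle$, which is cyclic of order $p$. A direct computation with cycle lengths shows that $H$ fixes pointwise every point outside $\alpha\cup\beta$, acts on $\alpha$ as a single $p$-cycle, and breaks $\beta$ into exactly $k$ orbits $B_0,\dots,B_{k-1}$ of size $p$, where $B_c=\{\beta_j:\ j\equiv c \pmod k\}$ if $\beta=(\beta_0,\dots,\beta_{pk-1})$; moreover $g$ cyclically permutes the $B_c$.

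Next I reduce the whole statement to a question about the subposet $\alpha\cup\beta$. Since every point outside $\alpha\cup\beta$ is fixed by $g^{m}$, the argument of Remark \ref{extension} applies verbatim to $H$ in place of $\langle g\rangle$: any automorphism $h$ of the subposet $\alpha\cup\beta$ that preserves each $H$-orbit extends to an automorphism of $P$ fixing everything outside. I then check that such an extension lies in $\langle g\rangle$ if and only if $h$ is induced by the $H$-action: the only powers of $g$ preserving every block $B_c$ are those in $\langle g^{k}\rangle$, and $g^{k}$ restricts to $\alpha\cup\beta$ as a generator of $H|_{\alpha\cup\beta}$ (because $\gcd(k,p)=1$). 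Hence it suffices to produce an automorphism of $\alpha\cup\beta$ that preserves every $H$-orbit and is not induced by $H$; its extension then contradicts $\aut(P)=\langle g\rangle$.

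To build such an $h$ I exploit the rigidity of the comparabilities inside $\alpha\cup\beta$. Each $g$-orbit is an antichain, so the only relations are between $\alpha$ and $\beta$, and by the one-way property they point in a single direction, say (after passing to the opposite poset if needed) from $\alpha$ up to $\beta$. Because the relation is $g$-equivariant and $\alpha$ has period $p$, it is governed by one subset $S\subseteq\Z_p$: writing $\alpha=(a_0,\dots,a_{p-1})$, one has $a_i<\beta_j$ iff $(j-i \bmod p)\in S$. If $S=\emptyset$ or $S=\Z_p$ the profiles of the points of a single block coincide, and a transposition inside $B_0$ already works. Otherwise I apply Lemma \ref{nuevofacil} to the two-orbit poset $\alpha\cup B_0$, which is the circulant bipartite poset determined by $S$, obtaining an automorphism $\psi$ preserving $\alpha$ and $B_0$ and not induced by $H|_{\alpha\cup B_0}$; I then define $h$ by letting it act on $\alpha$ exactly as $\psi$ does and on each block $B_c$ by the permutation $\psi|_{B_0}$ transported through the absolute identification $\beta_j\mapsto (j \bmod p)$. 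Since all blocks carry the same set $S$, this $h$ preserves every relation and every $H$-orbit.

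The point I expect to require the most care, and the reason the naive construction fails, is this transport step. Conjugating $\psi$ by the powers of $g$ that carry $B_0$ to $B_c$ would force $\psi|_{\alpha}$ to commute with $g|_{\alpha}$, i.e. to be a power of the $p$-cycle on $\alpha$, which it need not be (for instance when $\psi$ is a reflection). Using instead the absolute index $j\bmod p$ on every block keeps a single coherent action on $\alpha$ while still matching $S$ block by block. Finally I verify $h\notin H|_{\alpha\cup\beta}$: the generator of $H$ acts on $\alpha$ and on each block as the same nonzero shift, so if $\psi|_{\alpha}$ is not a shift we are done immediately, and if $\psi|_{\alpha}$ is a shift then $\psi|_{B_0}$ is a different shift, again placing $h$ outside $H$. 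This contradiction completes the proof, the genuine combinatorial content (existence of $\psi$ for $p=3,5,7$) having already been isolated in Lemma \ref{nuevofacil}.
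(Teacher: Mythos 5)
Your proof is correct and follows essentially the same route as the paper: reduce to the subposet $\alpha\cup\beta$ via the orbits of $g^{n/p}$ and Remark \ref{extension}, apply Lemma \ref{nuevofacil} to $\alpha$ together with one block, propagate the resulting automorphism to the remaining blocks, and get the contradiction from $k\mid r$. The only (immaterial) difference is the propagation step: the paper conjugates by $g^{tp}$ --- harmless precisely because $g^{p}$ fixes $\alpha$ pointwise, so the commutation issue you flag never arises --- whereas you use the residue-mod-$p$ identification of the blocks; both work.
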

\begin{proof}
Suppose $\beta=(0,1,\ldots, pk-1)$. Let $Q$ be the subposet of $P$ whose points are those of $\alpha$ and $\beta$. Assume that there is no other cycle in $g$ whose length is divisible by $p$. In particular $p\parallel n$. Since the order of any cycle of $g$ different from $\alpha$ and $\beta$ divides $\frac{n}{p}$, the automorphism $g^{\frac{n}{p}}$ fixes every point not in $Q$. Moreover $g^{\frac{n}{p}}$ has $k+1$ orbits of order $p$, which are the underlying set of $\alpha$ and $A_i=\{0\le j\le pk-1 | \ j\equiv i (k)\}$ for $0\le i\le k-1$. In particular, by Remark \ref{extension} every automorphism of $Q$ for which these sets are invariant extends to an automorphism of $P$.

Let $Q'$ be the subposet of $Q$ whose points are those of $\alpha$ and $A_0$. Since $g^{k}$ induces an automorphism of $Q'$ with two orbits of order $p$, by Lemma \ref{nuevofacil} there is an automorphism $\varphi$ of $Q'$ not induced by a power of $g^{k}$ for which the underlying set of $\alpha$ and $A_0$ are invariant. We extend $\varphi$ to an automorphism $\overline{\varphi}$ of $Q$ as follows. Let $j$ be a point of $\beta$, $0\le j\le kp-1$. Let $0\le i\le k-1$ be such that $j\in A_i$. Since $p\nmid k$, there exists a unique  $0\le t\le k-1$ such that $k|j+tp$, in other words $j+tp$, considered modulo $kp$, lies in $A_0$. Then $\varphi(j+tp)\in A_0$. Define $\overline{\varphi}(j)=\varphi(j+tp)-tp\in A_i$. We claim that $\overline{\varphi}$ is an automorphism of $Q$. It is clearly bijective. Two different points of $\beta$ cannot be comparable as they are in the same orbit. Suppose $j$ in $\beta$ and $a$ in $\alpha$ are comparable, say $a<j$. Let $0\le t\le k-1$ be such that $k|j+tp$. Then $a=g^{tp}(a)<g^{tp}(j)=j+tp$. Since $\varphi$ is a morphism, $\varphi(a)<\varphi(j+tp)$. Thus $\overline{\varphi}(a)=\varphi (a)=g^{-tp}(\varphi(a))<g^{-tp} (\varphi(j+tp))=\varphi(j+tp)-tp=\overline{\varphi}(j)$. Since the underlying set of $\alpha$ and each $A_i$ are $\overline{\varphi}$-invariant, $\overline{\varphi}$ extends to an automorphism of $P$, which must be a power $g^r$ of $g$. Since $g^r$ leaves $A_0$ invariant, in particular $r=g^r(0)\in A_0$, so $k|r$ and $\varphi$ is then induced by a power of $g^k$, a contradiction.   
\end{proof}

\begin{lema} \label{lema4}
Let $P$ be a poset with cyclic automorphism group of order $n\ge 1$, and let $g\in \aut (P)$ be a generator. Suppose that $g$ contains two $4$-cycles $\alpha, \beta$. Then it contains a third cycle of length divisible by $4$ or two more cycles of even length. 
\end{lema}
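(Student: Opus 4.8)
The plan is to argue by contradiction, assuming that $g$ contains neither a third cycle of length divisible by $4$ nor two further cycles of even length. Then $\alpha,\beta$ are the only cycles whose length is divisible by $4$, and at most one further cycle has even length, necessarily of length $\equiv 2\pmod 4$. Hence the $2$-adic valuation of every cycle length is at most $2$, attained by $\alpha,\beta$, so $4\parallel n$; write $n=4m$ with $m$ odd. As in Lemma \ref{lema357}, the idea is to build an automorphism of $P$ restricting to a prescribed non-induced automorphism of a suitable $g^{n/4}$-invariant subposet, and to contradict $\aut(P)=\langle g\rangle$.

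The crucial point is that $g^{n/4}=g^{m}$ (with $m$ odd) acts as a \emph{full} $4$-cycle on each of $\alpha$ and $\beta$, while fixing every point in an odd cycle (whose length, being odd and dividing $4m$, divides $m$). First I would dispose of the case with no extra even cycle: then $g^m$ fixes everything outside $Q:=\alpha\cup\beta$ and acts on $Q$ with exactly the two order-$4$ orbits $\alpha,\beta$. Lemma \ref{nuevodificil} yields an automorphism $\varphi$ of $Q$ preserving $\alpha,\beta$ and not induced by $\langle g^m|_Q\rangle$, and Remark \ref{extension}, applied to $g^m$, extends it to some $\tilde\varphi=g^r\in\langle g\rangle$. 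Since $g^m|_Q$ generates the full (order-$4$) image of $\langle g\rangle$ in $\aut(Q)$, the restriction $g^r|_Q=\varphi$ is automatically induced, a contradiction.

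The real case is that of one extra even cycle $\gamma=(c_0,\dots,c_{2s-1})$ with $s$ odd and $s\mid m$. Here $g^m$ acts on $\gamma$ as $\gamma^m$, whose $s$ orbits are the pairs $D_j=\{c_j,c_{j+s}\}$, on each of which $g^m$ restricts to a transposition (because $m\equiv s\pmod{2s}$). Put $R:=\alpha\cup\beta\cup D_0$. Then $g^m|_R$ has order $4$ and gives a $\Z_4$-action on $R$ with orbits $\alpha,\beta$ of order $4$ and $D_0$ of order $2$, so Lemma \ref{nuevodificil} again provides an automorphism $\varphi$ of $R$ preserving these orbits but not induced. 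The main obstacle is to spread $\varphi$ to an automorphism $\bar\varphi$ of $\alpha\cup\beta\cup\gamma$. Unlike Lemma \ref{lema357}, where the translations carrying $\beta$ into the chosen orbit automatically fixed $\alpha$, here each $c_i$ must be translated into $D_0$ by a power $g^e$ that \emph{simultaneously} fixes $\alpha$ and $\beta$ pointwise; that is, $4\mid e$ and $e\equiv -i\pmod s$, which is solvable exactly because $\gcd(4,s)=1$. Setting $\bar\varphi(c_i)=g^{-e}\varphi(g^{e}(c_i))$ and $\bar\varphi=\varphi$ on $\alpha\cup\beta$, one checks as in Lemma \ref{lema357} that $\bar\varphi$ is a well-defined automorphism and that $\bar\varphi(D_j)=D_j$ for every $j$; by Remark \ref{extension} it extends to $\tilde\varphi=g^r\in\langle g\rangle$.

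To finish, I would identify the image of $\langle g\rangle$ in $\aut(R)$: fixing $R$ pointwise forces $4\mid e$ and $2s\mid e$, so the kernel is $4s\Z$, the image is cyclic of order $4s$, and $\langle g^m|_R\rangle$ corresponds to the multiples of $\gcd(m,4s)=s$. Since $\bar\varphi$, hence $g^r$, preserves $D_0$, we get $s\mid r$, so $g^r|_R\in\langle g^m|_R\rangle$; thus $\varphi=g^r|_R$ is induced by the action, the required contradiction.
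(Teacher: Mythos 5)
Your argument is correct and follows essentially the same route as the paper's proof: the same two cases, the same application of Lemma \ref{nuevodificil} to the subposet $\alpha\cup\beta\cup D_0$ with the $\Z_4$-action of $g^{n/4}$, the same translation-by-$g^{4t}$ trick to spread the automorphism over all of $\gamma$, and the same concluding divisibility contradiction $s\mid r$. (One harmless slip: the image in $\aut(R)$ of the powers of $g$ preserving $R$ has order $4$, not $4s$; this does not affect your conclusion that $s\mid r$ forces $g^r|_R\in\langle g^m|_R\rangle$.)
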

\begin{proof}
The proof is very similar to that of Lemma \ref{lema357}, so we omit details. If $\alpha$ and $\beta$ are the unique two cycles of even length in $g$, then by Lemma \ref{nuevodificil} there is an automorphism $h$ of the poset of points of these two cycles which is not induced by a power of $g$, and moreover has the underlying sets of $\alpha$ and $\beta$ as invariant sets. Since the non-trivial orbits of $g^{\frac{n}{4}}\in \aut(P)$ are the underlying sets of $\alpha$ and $\beta$, $h$ extends to an automorphism of $P$, a contradiction.  

Suppose then there exists a third cycle $\gamma=(1,2,\ldots, 2k)$ in $g$ with $k$ odd, and that there is no other cycle of even length. We define $Q$ to be the subposet whose points are those of $\alpha, \beta$ and $\gamma$. Then $g^{\frac{n}{4}}$ fixes every point not in $Q$. The other orbits of $g^{\frac{n}{4}}$ are the underlying sets of $\alpha$ and $\beta$, and $A_i=\{i,k+i\}$ for $0\le i\le k-1$. Let $Q'$ be the subposet whose points are those of $\alpha, \beta$ and $A_0$. Then $g^k$ induces an automorphism of $Q'$ and by Lemma \ref{nuevodificil} there is an automorphism $\varphi$ of $Q'$ which is not induced by a power of $g^k$, and for which the underlying sets of $\alpha, \beta$ and $A_0$ are invariant. We extend it to an automorphism $\overline{\varphi}$ of $Q$ by defining $\overline{\varphi}(j)=\varphi(j+4t)-4t$, where $t$ is such that $k|j+4t$. Then $\overline{\varphi}$ is bijective, it is a morphism and leaves each $A_i$ invariant. It extends to an automorphism of $P$, say $g^r$. Since $g^r$ leaves $A_0$ invariant, then $k|r$, which implies that $h$ is induced by a power of $g^k$, a contradiction.
\end{proof}

\section{Weights and the lower bound}  \label{sectionweights}

Let $\sigma$ be a permutation of order $n$ of a finite set $X$. Let $\alpha$ be a cycle in $\sigma$ of length $l=p_1^{r_1}p_2^{r_2}\ldots p_k^{r_k}$, where the $p_i$ are distinct prime integers, $r_i\ge 1$ for every $i$. For each prime power $p^r$ we will define a weight $w_{p^r}(\alpha)\in \R_{\ge 0}$ which depends on $p^r,l$ and $n$, in such a way that $\sum\limits_{p^r} w_{p^r}(\alpha)p^r= l$, where the sum is taken over all prime powers dividing $n$. In particular $\# X\ge \sum\limits_{p^r\parallel n} (\sum\limits_{\alpha \in \sigma} w_{p^r}(\alpha))p^r$. For each $l\ge 2$ we will assign the weight of every prime power $p^r$ in a cycle $\alpha$ of length $|\alpha|=l$ according to a series of rules. In every case, if the weight $w_{p^r}(\alpha)$ is not explicitly defined for some prime power, we assume it is $0$.  

\medskip

\noindent \textbf{Exception 6}. Suppose $l=6$. If $3\parallel n$ then $w_3(\alpha)=2$. If $3\nparallel n$ and $2\parallel n$, then $w_{2}(\alpha)=3$. If $3\nparallel n$ and $2\nparallel n$, then $w_4(\alpha)=\frac{3}{2}$.

\medskip

\noindent \textbf{Exception 12}. Suppose $l=12$. If $3\parallel n$ then $w_3(\alpha)=4$. If $3\nparallel n$, then $w_{4}(\alpha)=3$.

\medskip

\noindent \textbf{Exception 10-14}. Suppose $l=2p$ for $p=5$ or $7$. If $2\parallel n$, $w_2(\alpha)=1$. Otherwise $w_4(\alpha)=\frac{1}{2}$. In any case $w_p(\alpha)=\frac{2(p-1)}{p}$.



\medskip

\noindent \textbf{General case}. Suppose  $l=p_1^{r_1}p_2^{r_2}\ldots p_k^{r_k}\neq 6,12,10,14$, where the $p_i$ are different primes and each $r_i\ge 1$. For each $1\le i\le k$, we define $w_{p_i^{r_i}}(\alpha)=\frac{\prod\limits_{j\neq i} p_j^{r_j}}{k}$, unless $p_i^{r_i}=2$ and $2\nparallel n$. In that case, $w_2(\alpha)=0$, while $w_4(\alpha)=\frac{\prod\limits_{j\neq i} p_j^{r_j}}{2k}$. In particular, if $l=p^r\ge 3$ is a prime power, $w_{p^r}(\alpha)=1$.

\bigskip

Note that, as we required, the sum $\sum\limits_{p^r|n} w_{p^r}(\alpha)$ over all the prime powers dividing $n$ is the length $l$ of $\alpha$. Note also that if $l=p_1^{r_1}p_2^{r_2}\ldots p_k^{r_k}$, then $w_{p^r}(\alpha)\neq 0$ only if $p^r=p_i^{r_i}$ for some $1\le i\le k$ or $p^r=4$. 

\begin{teo} \label{main}
Let $n\ge 1$. Let $P$ be a poset with $\aut (P)$ cyclic of order $n$ generated by $g$. Let $p^r$ be a prime power which exactly divides $n$. If $p^r\neq 2,4$ then $\sum\limits_{\alpha \in \sigma} w_{p^r}(\alpha)\ge b(p^r)$. 
If $3\nparallel n$ and $p^r=2$ or $p^r=4$, $\sum\limits_{\alpha \in \sigma} w_{p^r}(\alpha)\ge b(p^r)$ as well.
If $3\parallel n$ and $2\parallel n$, $\sum\limits_{\alpha \in \sigma} (2w_{2}(\alpha)+3w_{3}(\alpha))\ge 2b(2)+3b(3)=11$. Finally, if $3\parallel n$ and $4\parallel n$, $\sum\limits_{\alpha \in \sigma} (4w_{4}(\alpha)+3w_{3}(\alpha))\ge 4b(4)+3b(3)-1=20$.
\end{teo}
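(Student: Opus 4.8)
The plan is to bound, for each prime power $p^r \parallel n$, the total weight $\sum_{\alpha \in g} w_{p^r}(\alpha)$ by controlling two things: how many cycles of $g$ have length divisible by $p^r$, and how small $w_{p^r}(\alpha)$ can be for such a cycle. Since $p^r \parallel n$, a cycle satisfies $p^r \mid |\alpha|$ if and only if $p^r \parallel |\alpha|$, as no cycle length is divisible by $p^{r+1}$. The elementary fact underlying everything is that $w_{p^r}(\alpha) \ge 1$ whenever $p^r \parallel |\alpha|$ and $p^r \ne 2,4$: in the general case $w_{p^r}(\alpha) = \tfrac{|\alpha|/p^r}{k} \ge \tfrac{2^{k-1}}{k} \ge 1$ (where $k$ is the number of distinct primes dividing $|\alpha|$), and the four exceptional lengths $6,10,12,14$ only increase this. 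I would first record the sharper minima I will need: among cycles that are not $p^r$-cycles, $w_{p^r}(\alpha) \ge 2$ when $p^r = 3$ and $w_{p^r}(\alpha) \ge \tfrac32$ when $p^r = 5,7$, while for $p^r = 4$ a non-$4$-cycle of length $\ne 12$ has $w_4 \ge \tfrac52$ and a length-$12$ cycle has $w_4 = 0$ (its weight being absorbed into $w_3$ when $3 \parallel n$).

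For the first assertion, if $p^r \notin \{3,5,7\}$ then $b(p^r) = 2$ and Lemma \ref{dos} already furnishes two cycles of length divisible by $p^r$, each of weight $\ge 1$. If $p^r \in \{3,5,7\}$ then $b(p^r) = 3$, and I split on the number of cycles of length divisible by $p$: three or more give $\sum w_{p} \ge 3$ at once, whereas if there are exactly two, Lemma \ref{lema357} forbids either from being a $p$-cycle (else a third such cycle would appear), so each has weight $\ge 2$ (for $p=3$) or $\ge \tfrac32$ (for $p=5,7$), again totalling $\ge 3$. The second assertion is entirely parallel: for $p^r = 2$ the mere existence of an even cycle gives $\sum w_2 \ge 1 = b(2)$, and for $p^r = 4$ I take the two $4$-divisible cycles from Lemma \ref{dos}; if either has length $>4$ its weight $\ge \tfrac52$ already suffices, and if both are $4$-cycles, Lemma \ref{lema4} supplies either a third $4$-divisible cycle or two further even cycles, each raising $\sum w_4$ to $3 = b(4)$. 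Here the hypothesis $3 \nparallel n$ is used to guarantee that length-$6$ and length-$12$ cycles still carry positive $w_4$.

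The last two assertions are the crux, because when $3 \parallel n$ the entire weight of a length-$6$ (respectively length-$12$) cycle is shifted onto $w_3$, so $w_2$ (respectively $w_4$) may vanish and the $2$- and $3$-parts must be bounded together. In all of this I keep $\sum w_3 \ge 3$ from the first assertion, i.e.\ $3\sum w_3 \ge 9$. For the assertion with $2 \parallel n$, I split on whether some even cycle has length $\ne 6$: if so it contributes $w_2 \ge 1$, giving $2\sum w_2 + 3\sum w_3 \ge 2 + 9 = 11$; if every even cycle has length $6$ then $\sum w_2 = 0$, but each such cycle has $w_3 = 2$, and combining the presence of a $6$-cycle with Lemmas \ref{dos} and \ref{lema357} forces $\sum w_3 \ge 4$, whence $3\sum w_3 \ge 12 \ge 11$. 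For the final assertion I run the analogous but more delicate analysis, organised around the length-$12$ cycles (the only $4$-divisible cycles with $w_4 = 0$): two or more of them already give $\sum(4w_4 + 3w_3) \ge 24$; if there is exactly one, or none, I combine Lemma \ref{dos} (applied both to $3$ and to $4$), Lemma \ref{lema357} and Lemma \ref{lema4} to manufacture enough further cycles, noting that a cycle is simultaneously $3$- and $4$-divisible only if its length is divisible by $12$, so that outside the length-$12$ case the $3$-divisible and $4$-divisible cycles are automatically distinct and their contributions add.

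The main obstacle is this final assertion, where the target $20 = 4b(4) + 3b(3) - 1$ carries the extra $-1$: the tight configuration is exactly two $6$-cycles together with two $4$-cycles, so the naive estimate ``$\sum w_4 \ge 3$ and $\sum w_3 \ge 3$'' (which would give $21$) is simply false, and several intermediate configurations---most transparently a length-$12$ cycle together with a single $4$-cycle and a single $3$-cycle, which contributes only $12 + 4 + 3 = 19$---fall exactly one unit short when estimated crudely. Recovering the missing unit in each case requires invoking Lemma \ref{lema357} or Lemma \ref{lema4} to produce an additional cycle of length divisible by $3$ or by $4$, and the delicate bookkeeping is to keep track of which cycles are $3$-divisible, which are $4$-divisible, and which (the length-$12$ ones) are both, so as never to count a contribution twice.
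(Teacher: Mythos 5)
Your proposal follows essentially the same route as the paper: the same per-cycle weight minima, the same use of Lemmas \ref{dos}, \ref{lema357} and \ref{lema4} to exclude the configurations with too few or too short cycles, and the same organization of the last two assertions around the exceptional lengths $6$ and $12$, with the tight configuration (two $6$-cycles plus two $4$-cycles) and the forbidden one ($12+4+3$) correctly identified. The only difference is that you leave the final assertion as an outline; the paper completes exactly the bookkeeping you describe by splitting on the number of $12$-cycles ($\ge 2$, exactly $1$, or none) and, in the last case, on whether a $6$-cycle occurs, obtaining the respective lower bounds $24$, $22$, $21$ and $20$.
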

\begin{proof}
If $p^r\neq 2,3,4,5,7$, by Lemma \ref{dos}, there are at least two cycles of length divisible by $p^r$. By hypothesis their lengths are not multiples of $p^{r+1}$. But if $\alpha$ is a cycle of $\sigma$ whose length is a multiple of $p^r$, then $w_{p^r}(\alpha)\ge 1$. Indeed, the weights in $\alpha$ are assigned according to the General case. If the length of $\alpha$ is $l=p_1^{r_1}p_2^{r_2}\ldots p_k^{r_k}$, we can assume $p^r=p_1^{r_1}$ and then $w_{p^r}(\alpha)=\frac{\prod\limits_{j=2}^k p_j^{r_j}}{k}\ge \frac{2^{k-1}}{k}\ge 1$. Thus, $\sum\limits_{\alpha \in \sigma} w_{p^r}(\alpha)\ge 2= b(p^r)$.

Suppose now $p^r=5$. If $\alpha$ is a cycle of $\sigma$ of length $l=5$, then $w_5(\alpha)=1$. If $l=10$, then $w_5(\alpha)=\frac{8}{5}\ge \frac{3}{2}$ (Exception 10-14). If $l=5s$ with $s=p_2^{r_2}p_3^{r_3}\ldots p_k^{r_k}\ge 3$ not divisible by $5$, then either $k=2$, or $k\ge 3$. In the first case $w_5(\alpha)=\frac{s}{2}\ge \frac{3}{2}$, and in the second case $w_5(\alpha)=\frac{\prod\limits_{j=2}^k p_j^{r_j}}{k}\ge \frac{2^{k-2}.3}{k}\ge 2\ge \frac{3}{2}$. 

By Lemma \ref{dos}, there are at least two cycles of length divisible by $5$ (and not by $5^2$). Suppose first there exactly two such cycles, $\alpha$ and $\alpha'$. None of them can be of length $5$ by Lemma \ref{lema357}. Thus $w_5(\alpha)+w_5(\alpha')\ge 2. \frac{3}{2}= 3=b(5)$. Finally, if there are at least three cycles in $\sigma$ of length divisible by $5$, then $\sum\limits_{\alpha \in \sigma} w_{5}(\alpha)\ge 3= b(5)$.

The case $p^r=7$ is similar to the previous one, with the observation that for length $l=14$, $w_7(\alpha)=\frac{12}{7}\ge \frac{3}{2}$ (Exception 10-14). So, also in this case $\sum\limits_{\alpha \in \sigma} w_{7}(\alpha)\ge 3= b(7)$.

Let $p^r=3$. If the length of a cycle $\alpha$ in $g$ is $l=3$, $w_3(\alpha)=1$. If $l=6$, $w_3(\alpha)=2$ (Exception 6). If $l=12$, $w_3(\alpha)=4$ (Exception 12). If $l=3s$ with $s=p_2^{r_2}p_3^{r_3}\ldots p_k^{r_k}\ge 5$, then either $k=2$, or $k\ge 3$. In the first case $w_3(\alpha)=\frac{s}{2}\ge \frac{5}{2}$, and in the second case $w_3(\alpha)=\frac{\prod\limits_{j=2}^k p_j^{r_j}}{k}\ge \frac{2^{k-2}.3}{k}\ge 2$. 

By Lemma \ref{dos} there are at least two cycles in $\sigma$ of length divisible by $3$ (and not by $3^2$). Suppose first there are exactly two such cycles $\alpha$ and $\alpha'$. None of them can have length $3$ by Lemma \ref{lema357}. Then $w_3(\alpha)+w_3(\alpha')\ge 2.2=4\ge 3=b(3)$. Finally, if there are at least three cycles in $\sigma$ of length divisible by $3$, then $\sum\limits_{\alpha \in \sigma} w_{3}(\alpha)\ge 3= b(3)$. Note that $\sum\limits_{\alpha \in \sigma} w_{3}(\alpha)\ge 4$ unless there are exactly three cycles of length $3$ and no other cycle of length divisible by $3$.

We analyze now the case that $3\nparallel n$ and $p^r=2$ or $4$. In the first situation, there is at least one cycle $\alpha$ of even length $l$ (not divisible by $4$). If $l=2$, $w_2(\alpha)=1$ (General case). If $l=6$, $w_2(\alpha)=3$ (Exception 6). If $l=10$ or $l=14$, then $w_2(\alpha)=1$ (Exception 10-14). If $l=2s$ with $s=p_2^{r_2}p_3^{r_3}\ldots p_k^{r_k}\neq 1,3,5,7$ (odd), then $w_2(\alpha)=\frac{\prod\limits_{j=2}^k p_j^{r_j}}{k}\ge \frac{3^{k-1}}{k}\ge \frac{3}{2}$. Thus $\sum\limits_{\alpha \in \sigma} w_{2}(\alpha)\ge 1= b(2)$. We consider the second situation, $p^r=4$. If $\alpha$ has length $l=4$, then $w_4(\alpha)=1$. If $l=12$, $w_4(\alpha)=3$ (Exception 12). If $l=4s$ with $s=p_2^{r_2}p_3^{r_3}\ldots p_k^{r_k}\ge 5$ (odd), then $k=2$ or $k\ge 3$. For $k=2$ we have $w_4(\alpha)=\frac{s}{2}\ge \frac{5}{2}$. For $k\ge 3$, $w_4(\alpha)\ge \frac{3^{k-1}}{k}\ge 3$. By Lemma \ref{dos}, $\sigma$ contains at least two cycles of lengths divisible by $4$ (and not by $8$). Suppose first there are exactly two such cycles, $\alpha$ and $\alpha'$, of lengths $l,l'$. If $l=l'=4$, then by Lemma \ref{lema4}, there exists a third and a fourth cycle $\beta, \beta'$ of lengths $2m$ and $2m'$ for some odd $m,m'$. The weights $w_4(\beta)$ that we obtain for each $m$ are the halves of the weights that we obtained for $2$ in cycles of the same length when $2\parallel n$. Namely, if $m=1$, $w_4(\beta)=\frac{1}{2}$ (General case); if $m=3$, $w_4(\beta)=\frac{3}{2}$ (Exception 6); if $m=5,7$, $w_4(\beta)=\frac{1}{2}$ (Exception 10-14); if $m=p_2^{r_2}p_3^{r_3}\ldots p_k^{r_k}\neq 1,3,5,7$ then $w_4(\beta)\ge \frac{3^{k-1}}{2k}\ge \frac{3}{4}$ (General case).

The same happens with $\beta'$. Thus $w_4(\alpha)+w_4(\alpha')+w_4(\beta)+w_4(\beta')\ge 1+1+\frac{1}{2}+\frac{1}{2}=3=b(4)$. If instead $l=4$ and $l'=12$, then $w_4(\alpha)+w_4(\alpha')=1+3=4> 3$. If $l=4$ and $l'=4s$ for some odd $s\ge 5$, then $w_4(\alpha)+w_4(\alpha')\ge 1+\frac{5}{2}>3$. If both $l$ and $l'$ are greater than $4$, then $w_4(\alpha)+w_4(\alpha')\ge \frac{5}{2}+\frac{5}{2}>3$. Finally, if there are at least three cycles of length divisible by $4$, then $\sum\limits_{\alpha \in \sigma} w_4(\alpha)\ge 3$. Thus, in any case $\sum\limits_{\alpha \in \sigma} w_4(\alpha)\ge 3=b(4)$.

It only remains to analyze the case $3\parallel n$ and $2\parallel n$ and the case $3\parallel n$ and $4\parallel n$.        
If $3\parallel n$ and $2\parallel n$, recall that we have already proved that $\sum\limits_{\alpha \in \sigma} w_{3}(\alpha)\ge 4$ or there are exactly three cycles of length $3$ and no other cycle of length divisible by $3$. In the first case $\sum\limits_{\alpha \in \sigma} (2w_{2}(\alpha)+3w_{3}(\alpha))\ge \sum\limits_{\alpha \in \sigma} 3w_{3}(\alpha)\ge 12$. In the second case, there exists a cycle $\beta$ in $\sigma$ of even length $m\neq 6$, so $w_2(\beta)\ge 1$. Thus $\sum\limits_{\alpha \in \sigma} (2w_{2}(\alpha)+3w_{3}(\alpha))\ge 2.1+3.3= 11$. 

The last case is $3\parallel n$ and $4\parallel n$.
Note that if there are no cycles of length $6$ nor $12$ in $\sigma$, then the computation $\sum\limits_{\alpha \in \sigma} w_4(\alpha)\ge 3$ remains valid as Exceptions 6 and 12 do not occur. Thus $\sum\limits_{\alpha \in \sigma} (3w_{3}(\alpha)+4w_{4}(\alpha))\ge 3.3+4.3=21>20$. If there are at least two $12$-cycles, then $\sum\limits_{\alpha \in \sigma} (3w_{3}(\alpha)+4w_{4}(\alpha))\ge 2.3.4=24>20$. If there is no $12$-cycle in $\sigma$ and $\sum\limits_{\alpha \in \sigma} w_4(\alpha)< 3$, then we must be in the case that there is a $6$-cycle. This already implies $\sum\limits_{\alpha \in \sigma} w_3(\alpha)\ge 4$, while the existence of two cycles of length divisible by $4$ implies $\sum\limits_{\alpha \in \sigma} w_4(\alpha)\ge 2$. Thus $\sum\limits_{\alpha \in \sigma} (3w_{3}(\alpha)+4w_{4}(\alpha))\ge 3.4+4.2=20$.

Thus we may assume $\sigma$ has a unique $12$-cycle. By Lemma \ref{dos} there is another cycle of length divisible by $4$, so $\sum\limits_{\alpha \in \sigma} w_4(\alpha)\ge 1$. On the other hand, $\sum\limits_{\alpha \in \sigma} w_{3}(\alpha)\ge 4+2=6$, as the weight of $3$ in a $12$-cycle is $4$ and by Lemmas \ref{dos} and \ref{lema357} there are either two more cycles of lengths divisible by $3$ or just one, but of length not $3$. Thus $\sum\limits_{\alpha \in \sigma} (3w_{3}(\alpha)+4w_{4}(\alpha))\ge 3.6+4.1=22>20$.
\end{proof}

\begin{coro}
Let $n=p_1^{r_1}p_2^{r_2}\ldots p_k^{r_k}$, where the $p_i$ are different primes and $r_i\ge 1$ for every $i$. Then the minimum number $\beta(\Z_n)$ of points in a poset with cyclic automorphism group of order $n$ is $\sum\limits_{i=1}^k b(p_i^{r_i})p_i^{r_i}-1$ if $3\parallel n$ and $4\parallel n$, and $\sum\limits_{i=1}^k b(p_i^{r_i})p_i^{r_i}$ otherwise.
\end{coro}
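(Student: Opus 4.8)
The plan is to read the corollary off the upper bound of Theorem~\ref{teoejemplos} and the lower bound packaged in Theorem~\ref{main}; the substantive work has already been done there, so what remains is to match the two.

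For the inequality $\beta(\Z_n)\le(\cdots)$, Theorem~\ref{teoejemplos} already exhibits a poset with automorphism group $\Z_n$ and exactly the asserted number of points, namely $\sum_{i=1}^k b(p_i^{r_i})p_i^{r_i}-1$ when $3\parallel n$ and $4\parallel n$, and $\sum_{i=1}^k b(p_i^{r_i})p_i^{r_i}$ otherwise.

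For the reverse inequality I would take an arbitrary poset $P$ with $\aut(P)$ cyclic of order $n$ and a generator $g$, and invoke the weight inequality $\#P\ge\sum_{p^r\parallel n}\bigl(\sum_{\alpha\in g}w_{p^r}(\alpha)\bigr)p^r$ recorded just before the definition of the weights. Since the prime powers exactly dividing $n$ are precisely $p_1^{r_1},\dots,p_k^{r_k}$, this reads
\[
\#P\ \ge\ \sum_{i=1}^k\Bigl(\sum_{\alpha\in g}w_{p_i^{r_i}}(\alpha)\Bigr)p_i^{r_i}.
\]
It then suffices to bound each inner sum by Theorem~\ref{main}, which I would split into three cases. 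Outside the two configurations ``$3\parallel n$ and $2\parallel n$'' and ``$3\parallel n$ and $4\parallel n$'', every exact prime power divisor $p_i^{r_i}$ satisfies $\sum_\alpha w_{p_i^{r_i}}(\alpha)\ge b(p_i^{r_i})$: this is the first clause of Theorem~\ref{main} when $p_i^{r_i}\ne 2,4$, and its second clause when $p_i^{r_i}=2$ or $4$, since then $2\parallel n$ or $4\parallel n$ forces $3\nparallel n$, the hypothesis of that clause. Summing yields $\#P\ge\sum_i b(p_i^{r_i})p_i^{r_i}$. In the configuration $3\parallel n$ and $2\parallel n$ I would instead keep the contributions of $2$ and $3$ together, using $\sum_\alpha(2w_2(\alpha)+3w_3(\alpha))\ge 11=2b(2)+3b(3)$ and bounding every other factor individually, which again gives $\#P\ge\sum_i b(p_i^{r_i})p_i^{r_i}$. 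In the configuration $3\parallel n$ and $4\parallel n$ I would pair $3$ with $4$ via $\sum_\alpha(3w_3(\alpha)+4w_4(\alpha))\ge 20=3b(3)+4b(4)-1$, obtaining $\#P\ge\sum_i b(p_i^{r_i})p_i^{r_i}-1$.

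In every case the lower bound matches the upper bound of Theorem~\ref{teoejemplos}, yielding the asserted value of $\beta(\Z_n)$. The one point deserving attention --- and the reason the exceptional estimates of Theorem~\ref{main} are phrased as combined inequalities rather than as separate bounds on $\sum_\alpha w_2$, $\sum_\alpha w_3$ and $\sum_\alpha w_4$ --- is that in a $6$- or $12$-cycle the entire weight is shifted onto $3$, leaving $w_2=0$ (respectively $w_4=0$), so no individual lower bound on the contribution of $2$ or $4$ survives; only after pairing $2$ with $3$, respectively $3$ with $4$, do the sharp constants $11$ and $20$ --- in particular the $-1$ in the case $3\parallel n$, $4\parallel n$ --- emerge. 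Beyond this bookkeeping the corollary poses no new difficulty: the genuine work resides entirely in Theorems~\ref{teoejemplos} and~\ref{main}.
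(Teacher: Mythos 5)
Your proposal is correct and follows essentially the same route as the paper: the upper bound is read off from Theorem \ref{teoejemplos}, and the lower bound comes from the inequality $\#P\ge\sum_{p^r\parallel n}(\sum_{\alpha\in g}w_{p^r}(\alpha))p^r$ combined with the case-by-case estimates of Theorem \ref{main}, pairing $2$ with $3$ (resp.\ $3$ with $4$) exactly where the paper does. The only difference is that you spell out the non-exceptional cases slightly more explicitly than the paper's terse ``otherwise, the bound is one more than this number,'' which is fine.
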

\begin{proof}
If $P$ is a poset with $\aut (P)\simeq \Z_n$ generated by $g$, then the number of points in $P$ is at least $\sum\limits_{\alpha \in g} |\alpha|=$ $\sum\limits_{\alpha \in g} \sum\limits_{p^r|n}w_{p^r}(\alpha)p^r\ge \sum\limits_{i=1}^k (\sum\limits_{\alpha \in g} w_{p_i^{r_i}}(\alpha))p_i^{r_i}$. If both $3$ and $4$ exactly divide $n$, by Theorem \ref{main} this is $\sum\limits_{p_i^{r_i}\neq 3,4} (\sum\limits_{\alpha \in g} w_{p_i^{r_i}}(\alpha))p_i^{r_i}+\sum\limits_{\alpha \in g} (3w_{3}(\alpha)+4w_4(\alpha))\ge \sum\limits_{p_i^{r_i}\neq 3,4} b(p_i^{r_i})p_i^{r_i}+3b(3)+4b(4)-1=\sum\limits_{i=1}^k b(p_i^{r_i})p_i^{r_i}-1$. Otherwise, the bound is one more than this number. The bound is attained by Theorem \ref{teoejemplos}.
\end{proof}


\begin{thebibliography}{99}

\bibitem{Arl1} W.C. Arlinghaus. \textit{The structure of minimal graphs with given abelian automorphism group}. Ph.D. Thesis, Wayne State University, 1979.

\bibitem{Arl2} W.C. Arlinghaus. \textit{The classification of minimal graphs with given abelian automorphism group}. Mem. Amer. Math. Soc. 57(1985), no. 330, viii+86.

\bibitem{Bab} L. Babai. \textit{On the minimum order of graphs with given group}. Canad. Math. Bull. 17(1974), no. 4, 467-470.

\bibitem{Bab2} L. Babai. \textit{Finite digraphs with given regular automorphism groups}. Period. Math. Hungar. 11(1980), no. 4, 257-270.

\bibitem{Barr} A.N. Barreto. \textit{Sobre los posets m\'as chicos con grupo de automorfismos abeliano
dado}. Tesis de Licenciatura, Universidad de Buenos Aires, 2021.

\bibitem{Bir} G. Birkhoff. \textit{Sobre los grupos de automorfismos}. Rev. Un. Mat. Argentina 11(1946), 155-157.

\bibitem{Fru} R. Frucht. \textit{Herstellung von Graphen mit vorgegebener abstrakter Gruppe}. Compositio Math. 6(1939), 239-250.

\bibitem{Fru49} R. Frucht. \textit{Graphs of degree 3 with given abstract group}. Canad. J. Math. 1(1949)
305-378.

\bibitem{Fru50} R. Frucht. \textit{On the construction of partially ordered systems with a given group of automorphisms}. Amer. J. Math. 72(1950), 195-199.		

\bibitem{Mer} R. L. Meriwether. \textit{Smallest graphs with a given cyclic group}. 1963, unpublished.
(1967)

\bibitem{Sab} G. Sabidussi. \textit{On the minimum order of graphs with given automorphism group}. Monatsh. Math. 63(1959), 124-127.

        
\end{thebibliography}
\end{document}